\newtheorem{theorem}{Theorem}[section]
\newtheorem{proposition}[theorem]{Proposition}
\newtheorem{corollary}[theorem]{Corollary}
\newtheorem{lemma}[theorem]{Lemma}
\newtheorem{claim}[theorem]{Claim}
\newtheorem{rmk}[theorem]{Remark}
\newtheorem{question}[theorem]{Question}
\newtheorem*{theorem*}{Theorem}
\newtheorem*{corollary*}{Corollary}
\newcommand{\ZZ}{\mathbb{Z}}
\newcommand{\A}{\mathbb{A}}
\newcommand \hyp {\operatorname{Exp}}
\providecommand{\keywords}[1]
{
	\small	
	\textbf{\textit{Keywords:}} #1
}
\providecommand{\subjclass}[1]
{
	\small	
	\textbf{\textit{MSC2020:}} #1
}
\title{HYPERSPACES OF THE DOUBLE ARROW}
\author{ Sebasti\'an Barr\'ia \footnote{Universidad de Concepci\'on, Chile. \\ The author was partially supported by CONICYT-PFCHA Doctorado Nacional 2017-21170738.} } 
\begin{document}
	\maketitle
	
	\begin{abstract}
		Let $\mathbb{A}$ and $\mathbb{S}$ denote the double arrow of Alexandroff and the Sorgenfrey line, respectively. We show that for any $n\geq 1$, the space of all unions of at most $n$ closed intervals of $\mathbb{A}$ is not homogeneous. We also prove that the spaces of non-trivial convergent sequences of $\mathbb{A}$ and $\mathbb{S}$ are homogeneous. This partially solves an open question of A. Arhangel'ski\v{i} \cite{ar87}. In contrast, we show that the space of closed intervals of $\mathbb{S}$ is homogeneous.
	\end{abstract}
	
	\keywords{Double arrow, Hyperspaces, Homogeneous spaces, Sorgenfrey, Nontrivial convergent sequences.}
	
	\subjclass{ 54B20, 54B10, 54F05, 54A20, 54D30, 54E35} 
	
	\section{Introduction.}

Given a space $X$, we denote by $\hyp{(X)}$ the set of all non-empty closed subsets of $X$. For a non-empty open set $V$ of $X$, let $[V]=\{F\in\hyp{(X)}:F\subset V\}$ and $\langle V\rangle=\{F\in\hyp{(X)}:F\cap V\neq\emptyset\}$. The collection of all sets $[V]$ and $\langle V\rangle$ is a subbase for a topology on $\hyp{(X)}$ called the \textit{Vietoris topology}. From now on, $\hyp(X)$ will be considered with this topology. Since $\langle\cup_i V_i\rangle=\cup_i\langle V_i\rangle$ for any collection of non-empty open sets $V_i$; if $X$ is generated by a base, then the Vietoris topology on $\hyp{(X)}$ is generated by the subbase of all sets of the form $[V]$ and $\langle W\rangle$ with $V$ open sets and $W$ basic sets. It is known that if $X$ is compact, then $\hyp (X)$ is also compact. Given a space $X$, a \textit{hyperspace} of $X$ is any subspace of $\hyp{(X)}$. All subsets of $\hyp{(X)}$ will be considered hyperspaces. 

Let $X$ be a Hausdorff space. A set $S\subset X$ will be called a \textit{nontrivial convergent sequence} in $X$ if $S$ is countably infinite and there is $x\in S$ such that $S\setminus V$ is finite for any open neighborhood $V$ of $x$. The point $x$ is called the limit of $S$ and we will say that $S$ converges to $x$. The hyperspace of all nontrivial convergent sequences in $X$ will be denoted $\mathcal{S}_c(X)$. The later space was introduced by García-Ferreira and Ortiz-Castillo in \cite{go} for metric spaces and studied in a more general setting in \cite{mpp}. Today has a great interest among topologists. In the usual sense, a convergent sequence in $X$ is a function $f:\omega\to X$ for which there exists $x\in X$ such that for each open neighborhood $V$ of $x$, there is $m\in\omega$ with $f(n)\in U$ for all $n\geq m$. If $f''(\omega)$ is infinite, then $(\{x\}\cup f''(\omega))\in\mathcal{S}_c(X)$.

A topological  space $X$ is \textit{homogeneous} if for every $x,y\in X$ there exists an autohomeomorphism $h$ of $X$ such that $h(x)=y$. Several classic results on homogeneity involve the study of the $\hyp(X)$. In this paper we are motivated by the following general question.
	
 \begin{question}
		When is $\hyp(X)$ homogeneous?
	\end{question}
	
	In the 1970's, it was shown by R. Schori and J. West \cite{sw} that $\hyp([0,1])$ is homeomorphic to the Hilbert cube. In particular, it is possible that the hyperspace  $\hyp(X)$ is homogeneous while $X$ is not. On the other hand, if  $\kappa>\aleph_1$, then $\hyp{(2^{\kappa})}$ is not homogeneous (see \cite{sce}). Thus, the question of homogeneity of the hyperspace turns out to be quite subtle. 
	
	A. Arhangel'ski\v{i} in \cite{ar87}  asked the following question (which appears in \cite{avm}).
	\begin{question}\label{quesarhan}
		Is the hyperspace  $\hyp{(\A)}$  homogeneous?
	\end{question} 
	
	In this paper we partially answer Question \ref{quesarhan}, by showing that.
	
\begin{theorem} \label{main4}
    $\mathcal{C}_m(\mathbb{A})$ is not homogeneous for any $m\geq 1$.
\end{theorem}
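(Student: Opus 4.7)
The plan is to identify a topological invariant that distinguishes two classes of points in $\mathcal{C}_m(\A)$. Every $F\in\mathcal{C}_m(\A)$ decomposes uniquely into at most $m$ pairwise disjoint closed intervals of $\A$, and $F$ is clopen as a subset of $\A$ precisely when each component has the form $[(x,1),(y,0)]$ with $x<y$. I would first realize $\mathcal{C}_1(\A)$ concretely as the compact subspace $\{(a,b)\in\A^2:a\leq b\}$ of $\A^2$ via $[a,b]\leftrightarrow(a,b)$, checking that this is a homeomorphism from compactness of $\A$ and the Vietoris axioms; an analogous parameterization is available for $\mathcal{C}_m(\A)$.

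Next I would introduce the candidate invariant: call $F\in\mathcal{C}_m(\A)$ \emph{isolated from above} when it admits a neighborhood $U$ in $\mathcal{C}_m(\A)$ containing no sequence $F_n\to F$ with each $F_n\supsetneq F$. For $F$ clopen in $\A$, the set $[F]\cap\mathcal{C}_m(\A)=\{G\in\mathcal{C}_m(\A):G\subset F\}$ is clopen and contains no proper superset of $F$ at all, so $F$ is isolated from above. If $F$ is not clopen in $\A$, there is a point $p\in F\setminus\inte{F}$; since $p$ is a one-sided limit in $\A$, one can enlarge $F$ slightly near $p$ to produce a sequence $F_n\supsetneq F$ with $F_n\to F$ in the Vietoris topology, so the invariant fails. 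For example, the whole space $\A\in\mathcal{C}_m(\A)$ is isolated from above, while any singleton $\{p\}\in\mathcal{C}_m(\A)$ admits the approximating sequence $[a_n,p]\supsetneq\{p\}$ with $a_n\nearrow p$ and so is not.

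The main obstacle is to upgrade this property to a genuine topological invariant, because its definition invokes the inclusion relation between closed subsets of $\A$, which is a priori external to the intrinsic topology of $\mathcal{C}_m(\A)$. My approach is to use the Vietoris characterization that $F\subset G$ iff $F\in\langle V\rangle\Rightarrow G\in\langle V\rangle$ for every open $V\subset\A$, and to intrinsically identify the subbasic family $\{\langle V\rangle:V\text{ open in }\A\}$ within the topology of $\mathcal{C}_m(\A)$---for example, as the open sets minimally stable under a suitable sequential enlargement operation, or through the lattice structure of clopen subsets. Working this description out rigorously is the technical heart of the argument; once it is in place, no autohomeomorphism of $\mathcal{C}_m(\A)$ can send the isolated-from-above point $\A$ to the non-isolated point $\{p\}$, yielding the claimed non-homogeneity for every $m\geq1$.
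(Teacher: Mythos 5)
There is a genuine gap, and it is the one you flag yourself and then defer: you never show that ``isolated from above'' is preserved by autohomeomorphisms of $\mathcal{C}_m(\A)$. The property is defined through the inclusion relation $\supsetneq$ on closed sets, which is structure a homeomorphism of the hyperspace has no obligation to respect, and your proposed repair --- intrinsically recovering the subbasic family $\{\langle V\rangle : V \ \text{open in}\ \A\}$ from the topology of $\mathcal{C}_m(\A)$ alone --- cannot succeed by any general mechanism. Indeed, $\hyp([0,1])$ is the Hilbert cube (Schori--West) and hence homogeneous, yet exactly your property distinguishes the point $[0,1]$ (which has no proper supersets at all, so is trivially ``isolated from above'') from the point $\{1/2\}$ (which is the Vietoris limit of the proper supersets $[1/2-1/n,1/2+1/n]$); so some autohomeomorphism of that hyperspace destroys the property, and no intrinsic reconstruction of the sets $\langle V\rangle$ is possible there. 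Any argument of your type must therefore exploit features specific to $\A$ and to $\mathcal{C}_m$ that make inclusion-like data topologically visible, and that is precisely the part you leave blank; as written, the argument only shows that the \emph{identity} on $\hyp(\A)$ does not carry $\A$ to $\{p\}$. (Two smaller points: the decomposition of $F\in\mathcal{C}_m(\A)$ into pairwise disjoint closed intervals is not unique, since two disjoint intervals whose adjacent endpoints are immediate neighbors merge into a single interval --- you want maximal order-convex components; and intervals of the form $[\langle 0,1\rangle,\langle y,0\rangle]$ are also clopen.)

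For contrast, the paper's proof does not attempt to define an order-theoretic invariant at all. It first represents $\mathcal{C}_m(\A)$ as the quotient $\Delta_{2m}(\A)/{\approx}$ of the ordered cube (Corollary \ref{ordchar}); then it shows that homogeneity would force $\mathcal{C}_m(\A)\cong{}^{2m}{\mathbb{A}}$, by moving the class of a constant tuple to the class of a tuple with strictly increasing projections and comparing small clopen neighborhoods of the two; finally it rules out such a homeomorphism by combining the fact that the diagonal copy of $\A$ is not $G_\delta$ (as $\A$ is compact and non-metrizable) with Proposition \ref{basiclema} applied to the coordinate functions of the induced embedding $\A\to{}^{2m}{\mathbb{A}}$, producing two sets that a homeomorphism would force to share accumulation points but which are separated by a clopen partition. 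The real work in that argument --- the reduction to ${}^{2m}{\mathbb{A}}$ and the monotonicity/$G_\delta$ analysis --- is exactly the work your invariant would need to do and does not.
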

	
	Where $\mathcal{C}_m(\mathbb{A})$ is the hyperspace of $\mathbb{A}$ consisting of all unions of at most $m$ non-empty closed intervals. 
	
	The following result can be seen as a companion of the previous Theorem. 
	
\begin{theorem}\label{main5}
$\mathcal{S}_c(\mathbb{A})$ is homogeneous.
\end{theorem}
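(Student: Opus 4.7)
The plan is to prove the stronger statement that $\aut(\A)$ acts transitively on $\mathcal{S}_c(\A)$: this suffices because each $h \in \aut(\A)$ induces a self-homeomorphism $h_{*}$ of $\hyp(\A)$ via $F \mapsto h(F)$, which restricts to $\mathcal{S}_c(\A)$ and hence witnesses the required homogeneity.

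Given $S, T \in \mathcal{S}_c(\A)$ with limits $p_S, p_T$, I first use the classical homogeneity of $\A$ to choose $h_0 \in \aut(\A)$ with $h_0(p_S) = p_T$; replacing $S$ by $h_0(S)$ reduces to the case of a common limit $p$. Applying the order-reversing autohomeomorphism $(y, i) \mapsto (1 - y, 1 - i)$ of $\A$ if necessary, we may assume $p = (x, 0)$ for some $x \in (0, 1]$, so $p$ is approached only from below in the lex order; by the definition of convergent sequence each of $S,T$ then has an infinite tail below $p$ with at most finitely many ``exceptional'' terms lying above $p$ or far from $p$, and a further preliminary application of $\aut(\A)$ fixing $p$ (rerouting the clopen set $\{r > p\}$ into a clopen subset of $\{r < p\}$ disjoint from the finite exceptional parts) can be used to absorb those into $\{r < p\}$.

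The main construction is local to $p$. The space $\A$ admits a clopen base of intervals of the form $[(a, 1), (b, 0)]$ with $a < b$, each itself homeomorphic to $\A$. Enumerating $S \setminus \{p\} = \{s_n\}_{n \geq 1}$ and $T \setminus \{p\} = \{t_n\}_{n \geq 1}$ in increasing lex order, I choose a common clopen neighborhood $V_0 = [(a, 1), (x, 0)]$ of $p$ containing $S \cup T$ (possible because the minimum $y$-coordinate occurring in the compact set $S \cup T$ is attained), and inductively construct two nested systems of clopen neighborhoods of $p$, $V_0 \supset V_1 \supset V_2 \supset \cdots$ and $V_0 \supset V_1' \supset V_2' \supset \cdots$, of the same interval form, with $\bigcap_n V_n = \bigcap_n V_n' = \{p\}$ and with each annulus $V_{n-1} \setminus V_n$ (respectively $V_{n-1}' \setminus V_n'$) a clopen interval containing exactly the single point $s_n$ (respectively $t_n$). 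Each annulus is again homeomorphic to $\A$, so the homogeneity of $\A$ supplies a homeomorphism $h_n : V_{n-1} \setminus V_n \to V_{n-1}' \setminus V_n'$ with $h_n(s_n) = t_n$. Gluing $h(p) := p$, $h|_{V_{n-1} \setminus V_n} := h_n$, and the identity on the clopen complement $\A \setminus V_0$ yields a bijection of $\A$; it is automatically continuous on every clopen piece, and continuous at $p$ because $h(V_m) = V_m'$ for all $m$ while $\{V_m\}$ and $\{V_m'\}$ are neighborhood bases of $p$. Thus $h \in \aut(\A)$ and $h(S) = T$.

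The main technical obstacle I anticipate is the bookkeeping for the nested partitions, particularly the ``double point'' case in which consecutive points $s_n, s_{n+1}$ share the same $y$-coordinate (that is, $s_n = (y,0)$ and $s_{n+1} = (y,1)$ for some $y$): they are immediate neighbors in the lex order with no element strictly between them, so no interval of the form $[(z, 1), (z', 0)]$ can separate them unless the partition boundary is set equal to their common $y$-coordinate. With that adjustment $s_n$ becomes the right endpoint of one annulus and $s_{n+1}$ the left endpoint of the next, and the induction proceeds; an entirely analogous care is required for the preliminary step that absorbs the finitely many above-$p$ exceptional terms into $\{r < p\}$.
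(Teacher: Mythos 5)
Your proposal is correct and follows essentially the same route as the paper: both prove the stronger fact that $\aut(\A)$ acts transitively on $\mathcal{S}_c(\A)$ by decomposing a punctured clopen neighborhood of the limit into nested clopen annuli, each a copy of $\A$ containing exactly one term of the sequence, matching annuli by homeomorphisms sending $s_n$ to $t_n$, gluing, and then passing to the induced Vietoris homeomorphism. The only real difference is that the paper first moves the limit to the endpoint $\langle 0,1\rangle$ and factors through a fixed canonical sequence $P$, which makes every term lie on one side of the limit automatically and so avoids your (correct but only sketched) preliminary ``absorption'' of the finitely many terms above $p$.
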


In \cite{bm} the authors show that the symmetric products $\mathcal{F}_m(\mathbb{A})$ are not homogeneous for any $m\geq 2$. Since $\mathcal{F}_m(\mathbb{A})\subset\mathcal{C}_m(\mathbb{A})$, now we are a little more closer to answer Question \ref{quesarhan}.

	The paper is organized as follows. In section \ref{seq} we prove that $\mathcal{S}_c(\mathbb{A})$ and $\mathcal{S}_c(\mathbb{S})$ are homogeneous. In section \ref{c1s} we prove that the space of non-empty closed intervals of $\mathbb{S}$ is homogeneous. In section \ref{cma} we give a geometric characterization for spaces of unions of at most $m$ non-empty closed intervals of a compact linearly ordered space and we prove that in the case of the double arrow this spaces are non-homogeneous. Finally, in section \ref{metri} we give a metrization theorem that was obtained in our efforts to prove Theorem \ref{main4} and generalizes a classical result on compact spaces. We will use \cite{en89} as a basic reference on topology and \cite{avm} as a reference for homogeneity and hyperspaces.
	

\section{Homogeneity of $\mathcal{S}_c(\mathbb{A})$ and $\mathcal{S}_c(\mathbb{S})$}\label{seq}

Let $\mathbb{A}_0={]}0,1]\times \{0\}, \mathbb{A}_1=[0,1{[}\times \{1\}$ and $\mathbb{A}=\mathbb{A}_0\cup \mathbb{A}_1$. Define the lexicographical ordering $\langle a,r\rangle \prec \langle b, s\rangle$ if $a<b$ or $a=b $ and $r<s$. The set $\A$ with the order topology is the \textit{double arrow space}.

\begin{proposition}
If $S,T\in\mathcal{S}_c(\mathbb{A})$, then there exists a homeomorphism $h:\mathbb{A}\rightarrow \mathbb{A}$ such that $h''(S)=T$.
\end{proposition}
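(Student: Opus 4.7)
The plan is to reduce the problem to a piecewise gluing of homeomorphisms on a clopen partition of $\mathbb{A}\setminus\{x\}$, where $x$ is the common limit of $S$ and $T$. First I would verify that $\mathbb{A}$ is homogeneous: any $\mathbb{A}_0$-point $\langle a,0\rangle$ can be sent to the maximum $\langle 1,0\rangle$ by appropriately swapping the two clopen halves $[\langle 0,1\rangle,\langle a,0\rangle]$ and $[\langle a,1\rangle,\langle 1,0\rangle]$ (each $\cong\mathbb{A}$), and the analogous statement for $\mathbb{A}_1$-points follows using the order-reversing symmetry $\langle a,r\rangle\mapsto\langle 1-a,1-r\rangle$. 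Applying this to the limits of $S$ and $T$, one may assume both sequences have the same limit $x$; by further use of the order-reversing symmetry, one may assume $x=\langle 1,0\rangle$ is the maximum. Consequently every other element of $S$ and $T$ lies strictly below $x$, and convergence to $x$ lets me enumerate $S\setminus\{x\}=\{s_1\prec s_2\prec\cdots\}$ and $T\setminus\{x\}=\{t_1\prec t_2\prec\cdots\}$ with $s_n,t_n\nearrow x$.

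Next I would construct a partition $\mathbb{A}\setminus\{x\}=\bigsqcup_{n\geq 1}A_n$ into clopen intervals $A_n=[\langle u_n,1\rangle,\langle v_n,0\rangle]$ with $u_n<v_n$, satisfying $u_{n+1}=v_n$, $s_n\in A_n$, and $u_n\to 1$. The cut $v_n$ lies in the closed coordinate interval between the first coordinates of $s_n$ and $s_{n+1}$; when these first coordinates are distinct, any intermediate value works, and in the degenerate twin case $s_n=\langle a,0\rangle$, $s_{n+1}=\langle a,1\rangle$ one sets $v_n=a$. Each such $A_n$ is clopen in $\mathbb{A}$ and homeomorphic to $\mathbb{A}$. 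The analogous partition $\bigsqcup_{n\geq 1}B_n$ with $t_n\in B_n$ is built the same way for $T$.

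The homeomorphism is then assembled as follows. By homogeneity of $\mathbb{A}$ applied to $A_n\cong B_n\cong\mathbb{A}$, choose for each $n$ a homeomorphism $h_n\colon A_n\to B_n$ with $h_n(s_n)=t_n$, and define $h\colon\mathbb{A}\to\mathbb{A}$ by $h(x)=x$ together with $h|_{A_n}=h_n$. Continuity on $\mathbb{A}\setminus\{x\}$ is immediate since each $A_n$ is open. For continuity at $x$, note that the clopen tails $[\langle c,1\rangle,\langle 1,0\rangle]$ form a neighborhood base at $x$, and any such tail contains all but finitely many $B_n$ (since the left endpoints $u'_n$ of $B_n$ satisfy $u'_n\to 1$); its preimage therefore consists of $\{x\}$ together with $\bigcup_{n\geq N}A_n$ for some $N$, which is the complement of the finite clopen union $\bigcup_{n<N}A_n$, hence a clopen neighborhood of $x$. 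The symmetric argument for $h^{-1}$ shows $h$ is a homeomorphism, and by construction $h''(S)=T$.

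The main obstacle is the clopen partition step: each $A_n$ must be nondegenerate (so that $A_n\cong\mathbb{A}$ and admits a flexible homeomorphism onto $B_n$), must contain exactly the point $s_n$ of $S$, and must be ordered so the pieces cover $\mathbb{A}\setminus\{x\}$. The twin case $\{s_n,s_{n+1}\}=\{\langle a,0\rangle,\langle a,1\rangle\}$ is the only delicate spot; once the cuts are chosen correctly, homogeneity of $\mathbb{A}$ supplies the local homeomorphisms $h_n$, and cofiniteness of neighborhoods of the limit takes care of the global continuity.
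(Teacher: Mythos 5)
Your proof is correct and follows essentially the same strategy as the paper's: reduce via homogeneity of $\mathbb{A}$ to a common limit point, partition the complement of that limit into countably many clopen intervals each containing exactly one sequence point, map the pieces to each other by homeomorphisms of clopen intervals (each $\cong\mathbb{A}$), and verify continuity at the limit using the cofinal tail neighborhoods. The only cosmetic differences are that the paper places the limit at the minimum $\langle 0,1\rangle$ and normalizes both sequences to a fixed canonical sequence $P$ rather than mapping $S$ to $T$ directly.
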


\begin{proof}
Let $S,T\in\mathcal{S}_c(\mathbb{A})$. First, we will prove that if $S=\{x\}\cup\{x_n:n\in\ZZ^+\}$ and $P=\{\langle 0,1\rangle\}\cup\{\langle 1/2^n,1\rangle:n\in\ZZ^+\}$, then there is a homeomorphism $h_1:\mathbb{A}\rightarrow \mathbb{A}$ such that $h_1''(S)=P$. Since $\mathbb{A}$ is homogeneous, there is a homeomorphism $f:\mathbb{A}\rightarrow \mathbb{A}$ with $f(x)=\langle 0,1\rangle$. We have that the sequence $f(x_n)$ converges to $f(x)=\langle 0,1\rangle$, so we can define inductively $z_1=\max \{f(x_n):n\in\ZZ^+\}$ and $z_m=\max\{f(x_n):n\in\ZZ^+\}\setminus\{z_1,\dots,z_{m-1}\}$ for $ m\geq 2$. By convergence, we can choose a clopen neighborhood $V_1$ of $\langle 0,1\rangle$ such that $f(x_n)\in V_1$ for every $n$ with $f(x_n)\neq z_1$ and $z_1\notin V_1$. Because $\mathbb{A}\setminus V_1$ and $[\langle 1/2,1\rangle,\langle 1,0\rangle]$ are homeomorphic to $\mathbb{A}$ and $\mathbb{A}$ is homogeneous, there exists a homeomorphism $g_1:\mathbb{A}\setminus V_1\rightarrow [\langle 1/2,1\rangle,\langle 1,0\rangle]$ such that $g_1(z_1)=\langle 1/2,1\rangle$. As before, we can choose a clopen neighborhood $V_2$ of $\langle 0,1\rangle$ such that $f(x_n)\in V_2$ for every $n$ with $f(x_n)\neq z_1,z_2$ and $z_1,z_2\notin V_2$. There exists a homeomorphism $g_2:V_1\setminus V_2\rightarrow [(\langle 1/2^2,1\rangle,\langle 1/2,0\rangle]$ with $g_2(z_2)=\langle 1/2^2,1\rangle$. Recursively, we can choose a clopen neighborhood $V_m$ of $\langle 0,1\rangle$ such that $f(x_n)\in V_m$ for every $n$ with $f(x_n)\neq z_1,\dots,z_m$ and $z_1,\dots,z_m\notin V_m$. There exists a homeomorphism $g_m:V_{m-1}\setminus V_{m}\rightarrow [\langle 1/2^{m},1\rangle,\langle 1/2^{m-1},0\rangle]$ with $g_m(z_m)=\langle 1/2^m,1\rangle$.

We define the homeomorphism $g=\bigcup g_m:{]}\langle 0,1\rangle,\langle 1,0\rangle]\rightarrow {]}\langle 0,1\rangle,\langle 1,0\rangle{]}$. Hence, we have the homeomorphism $\overline{g}:\mathbb{A}\rightarrow \mathbb{A}$ with $\overline{g}(x)=g(x)$ if $x\neq\langle 0,1\rangle$ and $\overline{g}(\langle 0,1\rangle)=\langle 0,1\rangle$. In this way, $h_1:=\overline{g}\circ f$ is the desired homeomorphism. 

Finally, by the previous argument there is a homeomorphism $h_2:\mathbb{A}\rightarrow \mathbb{A}$ such that $h_2''(P)=T$. Therefore, the homeomorphism $h:=h_2\circ h_1$ is as required.     
\end{proof}

Since the Sorgenfrey line is homeomorphic to $[0,1[$ with the subspace topology, we will assume that the $\mathbb{S}=[0,1[$. In a very similar way we can prove the following.

\begin{proposition}
If $S,T\in\mathcal{S}_c(\mathbb{S})$, then there exists a homeomorphism $h:\mathbb{S}\rightarrow \mathbb{S}$ such that $h''(S)=T$.
\end{proposition}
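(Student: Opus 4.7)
The plan is to mimic the structure of the preceding proposition's proof. First I would fix the canonical sequence $P = \{0\} \cup \{1/2^n : n \in \mathbb{Z}^+\}$ in $\mathbb{S} = [0,1[$ and reduce the statement to showing that every $S \in \mathcal{S}_c(\mathbb{S})$ admits a self-homeomorphism $h_S$ of $\mathbb{S}$ with $h_S''(S) = P$; the required map is then $h = h_T^{-1} \circ h_S$.

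The key simplification compared with the double-arrow argument is that the basic neighborhoods of a point $x$ in $\mathbb{S}$ are half-open intervals $[x, x + \varepsilon[$, so any nontrivial convergent sequence approaches its limit strictly from the right. Hence if $S$ has limit $x$, I would first apply the homogeneity of $\mathbb{S}$ to send $x$ to $0$; after this reduction every element of $S$ lies in $[0, 1[$ and $S \setminus \{0\}$ can be listed as a strictly decreasing sequence $z_1 > z_2 > \cdots \to 0$.

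For the construction itself, I would split $[0, 1[$ into $\{0\}$ together with the pairwise disjoint clopen Sorgenfrey intervals $[z_1, 1[$ and $[z_{m+1}, z_m[$ for $m \geq 1$. Each of these is homeomorphic to its target, namely $[1/2, 1[$ and $[1/2^{m+1}, 1/2^m[$ respectively, via an order-preserving affine bijection $g_0 : [z_1, 1[ \to [1/2, 1[$ with $g_0(z_1) = 1/2$, and $g_m : [z_{m+1}, z_m[ \to [1/2^{m+1}, 1/2^m[$ with $g_m(z_{m+1}) = 1/2^{m+1}$. Gluing these together and sending $0 \mapsto 0$ defines a bijection $h_S : \mathbb{S} \to \mathbb{S}$ with $h_S''(S) = P$.

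The main point to verify, though minor, is continuity of $h_S$ at $0$: a basic neighborhood $[0, 1/2^k[$ of the image point pulls back precisely to $[0, z_k[$, which is a basic neighborhood of $0$ in $\mathbb{S}$; the same computation applied to $h_S^{-1}$ handles continuity of the inverse. Continuity elsewhere is automatic because the decomposition is by clopen pieces and each $g_m$ is a homeomorphism. The Sorgenfrey setting is thus cleaner than the double-arrow case: the one-sidedness of convergence removes the need for the inductive choice of clopen neighborhoods $V_m$ that appeared there.
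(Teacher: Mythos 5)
Your proposal is correct and is essentially the adaptation the paper intends when it says the Sorgenfrey case is proved "in a very similar way": you reduce to a canonical sequence $P$, normalize the limit to $0$ by homogeneity, and glue homeomorphisms of the clopen pieces between consecutive terms (your intervals $[z_{m+1},z_m[$ are exactly the differences $V_m\setminus V_{m+1}$ of the nested clopen neighborhoods used in the double-arrow argument, with the extra convenience that explicit affine maps serve as the piece homeomorphisms). The only nitpick is that a nontrivial convergent sequence may contain finitely many stray terms away from the limit, so "approaches strictly from the right" is slightly overstated, but your construction handles this anyway since it only uses the decreasing enumeration of $S\setminus\{0\}$ tending to $0$.
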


\noindent{\bf Proof of Theorem \ref{main5}:}

\begin{proof}
Let $S,T\in \mathcal{S}_c(\mathbb{A})$ and $h$ as in the previous proposition. Let us define $\overline{h}:\mathcal{S}_c(\mathbb{A})\rightarrow \mathcal{S}_c(\mathbb{A})$ such that $\overline{h}(X)=h''(X)$. If $X\in \mathcal{S}_c(\mathbb{A})$, then $h^{-1}(X)\in \mathcal{S}_c(\mathbb{A})$, so $\overline{h}(h^{-1}(X))=X$ and $\overline{h}$ is onto. If $X,Y\in \mathcal{S}_c(\mathbb{A})$ and $\overline{h}(X)=\overline{h}(Y)$, then $h''(X)=h''(Y)$, so $X=Y$ by the injectivity of $h$. Hence, $\overline{h}$ is bijective and $\overline{h}(S)=T$. 

We will prove that $\overline{h}$ is continuous. Let $B$ a basic set of $\mathcal{S}_c(\mathbb{A})$. We have two cases. If $B=\mathcal{S}_c(\mathbb{A})\cap [V]$ with $V$ an open set of $\mathbb{A}$, then $\overline{h}^{-1}(B)=\mathcal{S}_c(\mathbb{A})\cap \overline{h}^{-1}([V])=\mathcal{S}_c(\mathbb{A})\cap [h^{-1}(V)]$. If $B=\mathcal{S}_c(\mathbb{A})\cap \langle V\rangle$ with $V$ a basic set of $\mathbb{A}$, then $\overline{h}^{-1}(B)=\mathcal{S}_c(\mathbb{A})\cap \overline{h}^{-1}(\langle V\rangle)=\mathcal{S}_c(\mathbb{A})\cap \langle h^{-1}(V)\rangle$. Therefore, $\overline{h}$ is continuous.

To end, we will prove that $\overline{h}$ is an open map. Let $B$ a basic set of $\mathcal{S}_c(\mathbb{A})$. If $B=\mathcal{S}_c(\mathbb{A})\cap [V]$ with $V$ an open set of $\mathbb{A}$, then $\overline{h}''(B)=\mathcal{S}_c(\mathbb{A})\cap \overline{h}''([V])=\mathcal{S}_c(\mathbb{A})\cap [h''(V)]$. If $B=\mathcal{S}_c(\mathbb{A})\cap \langle V\rangle$ with $V$ a basic set of $\mathbb{A}$, then $\overline{h}''(B)=\mathcal{S}_c(\mathbb{A})\cap \overline{h}''(\langle V\rangle)=\mathcal{S}_c(\mathbb{A})\cap \langle h''(V)\rangle$.
\end{proof}

Analogously, we can prove that

\begin{proposition}\label{seqsor}
$\mathcal{S}_c(\mathbb{S})$ is homogeneous.
\end{proposition}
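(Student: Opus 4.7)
The plan is to imitate the proof of Theorem \ref{main5} verbatim, with $\mathbb{A}$ replaced by $\mathbb{S}$ and with the analogous Sorgenfrey proposition (the one stated just before this: for any $S,T\in\mathcal{S}_c(\mathbb{S})$ there is a self-homeomorphism $h$ of $\mathbb{S}$ carrying $S$ to $T$) playing the role that its double-arrow counterpart plays in the proof of Theorem \ref{main5}. Nothing in the argument for Theorem \ref{main5} used any special feature of $\mathbb{A}$ beyond the existence of such an $h$, so the same template should succeed here.

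Concretely, given $S,T\in\mathcal{S}_c(\mathbb{S})$, I would first invoke the preceding proposition to obtain a homeomorphism $h:\mathbb{S}\to\mathbb{S}$ with $h''(S)=T$. Next I would define $\overline{h}:\mathcal{S}_c(\mathbb{S})\to\mathcal{S}_c(\mathbb{S})$ by $\overline{h}(X)=h''(X)$. Because $h$ is a homeomorphism, it sends nontrivial convergent sequences to nontrivial convergent sequences, so $\overline{h}$ is well defined; the same applies to $h^{-1}$, which gives the inverse of $\overline{h}$ and in particular bijectivity. By construction $\overline{h}(S)=T$.

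For continuity and openness, I would check the action of $\overline{h}$ on the subbasic sets of the Vietoris topology. For a set of the form $\mathcal{S}_c(\mathbb{S})\cap[V]$ with $V$ open in $\mathbb{S}$, one has $\overline{h}^{-1}(\mathcal{S}_c(\mathbb{S})\cap[V])=\mathcal{S}_c(\mathbb{S})\cap[h^{-1}(V)]$, and similarly $\overline{h}''(\mathcal{S}_c(\mathbb{S})\cap[V])=\mathcal{S}_c(\mathbb{S})\cap[h''(V)]$; both $h^{-1}(V)$ and $h''(V)$ are open, so these images/preimages are open in $\mathcal{S}_c(\mathbb{S})$. For a set of the form $\mathcal{S}_c(\mathbb{S})\cap\langle W\rangle$ with $W$ basic in $\mathbb{S}$, the set $h^{-1}(W)$ is merely open, not necessarily basic, but via the identity $\langle\bigcup_i V_i\rangle=\bigcup_i\langle V_i\rangle$ one can express $\langle h^{-1}(W)\rangle$ as the union of $\langle U\rangle$ over basic open $U\subseteq h^{-1}(W)$, which is open in $\mathcal{S}_c(\mathbb{S})$; the same remark applies to the forward direction.

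I do not foresee a real obstacle: the argument is formal once the underlying self-homeomorphism of $\mathbb{S}$ is in hand. The only point worth being careful about is precisely the one just noted, namely that in the Vietoris topology the subbase uses \emph{basic} sets inside $\langle\cdot\rangle$, so one must decompose $\langle h^{\pm 1}(W)\rangle$ into basic pieces rather than invoking it directly as a subbasic set. This is the same fine print silently used in the proof of Theorem \ref{main5}, and it goes through in the Sorgenfrey setting without modification.
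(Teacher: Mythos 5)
Your proposal is correct and is exactly what the paper intends: the paper's "proof" of this proposition is literally the word "Analogously," referring to the proof of Theorem \ref{main5}, and you carry out precisely that transplantation, invoking the preceding Sorgenfrey proposition for the underlying homeomorphism $h$ and lifting it to $\overline{h}(X)=h''(X)$ on the hyperspace. Your extra remark about decomposing $\langle h^{\pm1}(W)\rangle$ into basic pieces is a correct (and slightly more careful) handling of a detail the paper glosses over.
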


\section{Homogeneity of $\mathcal{C}_1(\mathbb{S})$} \label{c1s}

Let $\Delta_2=\{(x,y)\in\mathbb{S}^2: x\leq y\}$. Let $\mathcal{C}_n(\mathbb{S})\subset\hyp(\mathbb{S})$ be the hyperspace of all unions of at most $n$ non-empty closed intervals of $\mathbb{S}$. 
 
\begin{proposition} \label{shom}
The function $\rho:\Delta_2\rightarrow\mathcal{C}_1(\mathbb{S})$ defined by $\rho(a,b)=[a,b]$ is a homeomorphism.
\end{proposition}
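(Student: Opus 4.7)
The plan is to verify the three requirements for a homeomorphism in turn: $\rho$ is a bijection, $\rho$ is continuous, and $\rho^{-1}$ is continuous. For bijectivity, observe that an element of $\mathcal{C}_1(\mathbb{S})$ is, by definition, a non-empty closed interval of $\mathbb{S}=[0,1[$, hence of the form $[a,b]$ with $a\leq b$. One should check that $[a,b]$ really is Sorgenfrey-closed: its complement $[0,a[\,\cup\,]b,1[$ is a union of two open sets, the latter being $\bigcup_{c>b}[c,1[$. Injectivity of $\rho$ is immediate.

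For continuity of $\rho$, it suffices to pull back the two types of subbasic Vietoris sets. Given $[V]$ with $V\subset\mathbb{S}$ open and $[a,b]\subset V$, the openness of $V$ at $b$ furnishes $b'>b$ with $[b,b'[\subset V$, so $[a,b'[\subset V$; then the neighborhood $\Delta_2\cap([a,b'[\times[b,b'[)$ of $(a,b)$ maps into $[V]$. Given $\langle[c,d[\rangle$ with $[c,d[$ a basic set of $\mathbb{S}$, an elementary calculation shows $[a,b]\cap[c,d[\neq\emptyset$ iff $a<d$ and $b\geq c$, whence $\rho^{-1}(\langle[c,d[\rangle)=\Delta_2\cap([0,d[\times[c,1[)$, which is open in $\Delta_2$.

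For continuity of $\rho^{-1}$, I would prove that the coordinate maps $\min,\max:\mathcal{C}_1(\mathbb{S})\to\mathbb{S}$ are continuous. A direct computation gives $\min^{-1}([c,d[)=\mathcal{C}_1(\mathbb{S})\cap[[c,1[]\cap\langle[c,d[\rangle$ (the inclusion $[a,b]\subset[c,1[$ encodes $a\geq c$ and, combined with non-empty intersection with $[c,d[$, it encodes $a<d$), and similarly $\max^{-1}([c,d[)=\mathcal{C}_1(\mathbb{S})\cap[[0,d[]\cap\langle[c,1[\rangle$; both are open in the Vietoris subspace topology. I do not expect a serious obstacle: although $[a,b]$ is \emph{not} compact in the Sorgenfrey line (so the classical tube argument for $[V]$-preimages is unavailable), the one-sidedness of the Sorgenfrey basis is perfectly aligned with accessing the endpoints $a$ and $b$ of $[a,b]$ from the right through basic neighborhoods $[a,\cdot[$ and $[b,\cdot[$, which is exactly why every continuity check reduces to an elementary interval manipulation.
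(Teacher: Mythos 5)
Your proof is correct, and it takes a genuinely different route from the paper's at two points. For continuity of $\rho$ on sets $[V]$, the paper forms the union $B$ of all basic intervals of $V$ that meet $[a,b]$, argues $B$ is an order-convex open set containing $[a,b]$, and uses the neighborhood $\Delta_2\cap B^2$; you instead exploit the one-sidedness of the Sorgenfrey base to enlarge $[a,b]$ only at the right endpoint $b$, which shortens the argument and skips the convexity check, and your single identity $\rho^{-1}(\langle[c,d[\rangle)=\Delta_2\cap([0,d[\times[c,1[)$, resting on the correct equivalence that $[a,b]\cap[c,d[\neq\emptyset$ iff $a<d$ and $b\geq c$, replaces the paper's two-case analysis. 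The more substantial divergence is in the inverse direction: the paper proves $\rho$ is an open map, reducing to images of subbasic rectangles $\Delta_2\cap(C\times\mathbb{S})$ (a reduction that silently uses bijectivity of $\rho$, so that images commute with finite intersections) and exhibiting the Vietoris neighborhood $\langle C\rangle\cap[B]$ with $B=[a,\rightarrow[$ inside $\rho''(V)$; you instead prove that $\min$ and $\max$ are continuous on $\mathcal{C}_1(\mathbb{S})$, and your preimage formulas check out exactly (for $\min$, the constraint $b\geq c$ contributed by $\langle[c,d[\rangle$ is absorbed by $[[c,1[]$, leaving precisely $c\leq a<d$; dually for $\max$). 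Your version buys a cleaner logical structure, since continuity of $\rho^{-1}=(\min,\max)$ falls out of continuity of its two coordinates with no reduction on images needed, at the price of verifying two set identities; the paper's open-map computation is more local but leaves the reduction to subbasic rectangles implicit. Either way the proposition is fully established.
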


\begin{proof}
It is easy to see that $\rho$ is a bijection. For the continuity, we will prove that the preimages under $\rho$ of $[V]$ and $\langle W\rangle$, with $V$ an open set and $W=[c,d[$ a basic open set, are open. 

Let $V$ an open set of $\mathbb{S}$. There exists basic intervals $V_j$ such that $V=\bigcup_j V_j$. Let $(a,b)\in \rho^{-1}([V])=\{(x,y)\in \Delta_2:[x,y]\subset \bigcup_j V_j\}$. We define $B=\bigcup\{V_j:[a,b]\cap V_j\neq\emptyset\}$. We have that $B$ is an interval and open set that contains $[a,b]$. Let $(x,y)\in \Delta_2\cap B^2$. Since $x,y\in B$, we have that $[x,y]\subset B\subset \bigcup_j V_j=V$. Therefore, $(a,b)\in \Delta_2\cap B^2\subset \rho^{-1}([V])$ and $\rho^{-1}([V])$ is open. 

Let $W=[c,d[$ a basic interval of $\mathbb{S}$ and $(a,b)\in \rho^{-1}(\langle W\rangle)$. By definition, $[a,b]\cap W\neq\emptyset$. We have two cases. 

Case 1. If $c\leq b<d$, let us consider $(x,y)\in \Delta_2\cap (\mathbb{S}\times W)$. Thus, $[x,y]\cap W\neq \emptyset$. In this way, $(a,b)\in \Delta_2\cap (\mathbb{S}\times W)\subset \rho^{-1}(\langle W\rangle)$. 

Case 2. If $b\geq d$, necessarily $a< d$. Let $(x,y)\in \Delta_2\cap (]\leftarrow,d{[}\times [d,\rightarrow[)$. By definition, $[x,y]\cap W\neq \emptyset$. Therefore, $(a,b)\in \Delta_2\cap (]\leftarrow,d{[}\times [d,\rightarrow[) \subset \rho^{-1}(\langle W\rangle)$. We conclude that $\rho^{-1}(\langle W\rangle)$ is open.

To show that $\rho^{-1}$ is continuous, we will prove that $\rho$ is an open map. Without loss of generality, let $V=\Delta_2\cap (C\times \mathbb{S})$ an open set of $\Delta_2$, with $C$ a basic interval of $\mathbb{S}$ and $[a,b]\in \rho''(V)$. Thus, $(a,b)\in V$, that is to say $a\leq b$ and $a\in C$. Let $B=[a,\rightarrow[$ and consider $[x,y]\in \langle C\rangle\cap [B]$. Since $[x,y]\cap C\neq\emptyset$ and $[x,y]\subset B$, we have that $x\in C$, so $(x,y)\in V$. In this way, $[a,b]\in \langle C\rangle\cap [B]\subset\rho''(V)$. Therefore, $\rho''(V)$ is an open set.
\end{proof}

\begin{corollary}\label{sorinter}
    $\mathcal{C}_1(\mathbb{S})$ is homogeneous.
\end{corollary}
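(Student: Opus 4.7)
By Proposition~\ref{shom}, $\mathcal{C}_1(\mathbb{S})$ is homeomorphic to $\Delta_2 := \{(a,b) \in \mathbb{S}^2 : a \leq b\}$, so it suffices to prove that $\Delta_2$ is homogeneous. Since $\mathbb{S}$ is itself homogeneous (for instance via the piecewise Sorgenfrey auto-homeomorphism $x \mapsto x + \tfrac12 \bmod 1$ that swaps the clopen halves $[0,\tfrac12[$ and $[\tfrac12,1[$), its square $\mathbb{S}^2 = \mathbb{S} \times \mathbb{S}$ is homogeneous as well. My plan is therefore to construct a homeomorphism $\Delta_2 \cong \mathbb{S}^2$ and conclude.

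The construction would exploit the self-similar clopen decompositions of both spaces. From $\mathbb{S} = [0,\tfrac12[ \sqcup [\tfrac12,1[$, with each half clopen and homeomorphic to $\mathbb{S}$ by scaling, one gets
\[
\Delta_2 \cong \Delta_2 \sqcup \mathbb{S}^2 \sqcup \Delta_2,
\]
where the two ``diagonal corners'' $\Delta_2 \cap [0,\tfrac12[^2$ and $\Delta_2 \cap [\tfrac12,1[^2$ are clopen copies of $\Delta_2$, and the rectangular block $[0,\tfrac12[ \times [\tfrac12,1[$ is a clopen copy of $\mathbb{S}^2$. Iterating this decomposition yields an explicit clopen partition of the off-diagonal set $\Delta_2 \setminus D$ (where $D = \{(x,x) : x \in \mathbb{S}\}$ is the diagonal) into countably many basic Sorgenfrey rectangles, so $\Delta_2 \setminus D \cong \omega \cdot \mathbb{S}^2$. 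On the $\mathbb{S}^2$ side one similarly has $\mathbb{S}^2 \cong \omega \cdot \mathbb{S}^2$ from the same dyadic partition.

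The principal obstacle is absorbing the diagonal $D \cong \mathbb{S}$ into the matching. Since $D$ is closed but not open in $\Delta_2$, its points admit only ``triangular'' basic neighborhoods (each homeomorphic to a scaled copy of $\Delta_2$), whereas every point of $\mathbb{S}^2$ has ``rectangular'' basic neighborhoods homeomorphic to $\mathbb{S}^2$. My plan is to perform a back-and-forth matching at each dyadic level of refinement, pairing the shrinking triangular residues of $\Delta_2$ with chosen clopen copies of $\mathbb{S}^2$, and then to verify that the induced limit map is continuous at each diagonal point using the nesting of the basic neighborhoods. The technical crux will be choosing these matchings coherently across levels so that the resulting limit is both bijective and bicontinuous on $D$; from this homeomorphism $\Delta_2 \cong \mathbb{S}^2$, the homogeneity of $\mathcal{C}_1(\mathbb{S})$ is immediate.
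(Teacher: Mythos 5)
Your first step coincides with the paper's: Proposition \ref{shom} reduces the statement to the homogeneity of $\Delta_2$. From there the paper simply invokes [\cite{bm}, Theorem 1.4], which supplies the homogeneity of $\Delta_2$, whereas you set out to reprove this by exhibiting a homeomorphism $\Delta_2\cong\mathbb{S}^2$ and then using the homogeneity of $\mathbb{S}^2$ (which you correctly obtain from the translations $x\mapsto x+c \bmod 1$). Your preparatory observations are fine: the dyadic decomposition $\Delta_2\cong\Delta_2\sqcup\mathbb{S}^2\sqcup\Delta_2$ is correct, the three pieces are indeed clopen, and iterating it does give $\Delta_2\setminus D\cong\omega\cdot\mathbb{S}^2\cong\mathbb{S}^2$.

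However, there is a genuine gap at exactly the point where all the difficulty lives: you never construct the homeomorphism $\Delta_2\cong\mathbb{S}^2$; you only announce a plan to absorb the diagonal $D$ by a back-and-forth matching and explicitly defer the ``technical crux'' of choosing the matchings coherently and verifying bicontinuity on $D$. This cannot be waved through. A diagonal point of $\Delta_2$ has a neighborhood base of clopen triangles (copies of $\Delta_2$), while every point of $\mathbb{S}^2$ has a base of clopen rectangles (copies of $\mathbb{S}^2$); reconciling these two local pictures for every diagonal point simultaneously is precisely the content of the cited theorem, not a routine verification. Indeed, the analogous reconciliation provably fails for the double arrow --- that is the whole point of Theorem \ref{main4} and of the non-homogeneity of $\mathcal{F}_2(\mathbb{A})$ in \cite{bm} --- so the success of such a construction for $\mathbb{S}$ rests on a delicate argument that your proposal does not supply. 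As written, the proposal establishes only what Proposition \ref{shom} already gives and leaves the homogeneity of $\Delta_2$ unproved; either carry out the matching in full detail or cite [\cite{bm}, Theorem 1.4] as the paper does.
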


\begin{proof}
   By the previous proposition, $\mathcal{C}_1(\mathbb{S})$ is homeomorphic to $\Delta_2$. By [\cite{bm}, Theorem 1.4] the results holds.
\end{proof}

\begin{question}
    Is the hyperspace $\mathcal{C}_2(\mathbb{S})$ homogeneous?
\end{question}

\section{Non-homogeneity of $\mathcal{C}_m(\mathbb{A})$} \label{cma} 

The purpose of this section is to  prove Theorem \ref{main4}. It will be convenient to introduce some notation.
	
We will think of an element of the finite power $x\in {}^m{X}$ as a function $x:m\to X$. Given a linearly ordered space $X$, let $\Delta_m(X)=\{x\in {}^m{X} : \forall i\in m-1 (x(i)\leq x(i+1))\}$ and let $\mathcal{F}_m(X)$ be the hyperspace of $X$ consisting of all finite non-empty subsets of cardinality at most $m$. Let $\rho:\Delta_m(X)\to \mathcal{F}_m(X)$ be the map given by $\rho(x)=\{x(0),\dots,x(m-1)\}$ and let $\sim $ denote the equivalence relation on $\Delta_m(X)$ defined by $x\sim y$ if and only if $\rho(x)=\rho(y).$ We consider $\Delta_m(X)/{\sim}$ as a topological space with the quotient topology.
	
The following classical fact gives us a more geometric representation of $\mathcal{F}_m(X)$.

\begin{proposition}[\cite{g54}]\label{qhom}
If $X$ is a linearly ordered space, then the  map $\Tilde{\rho}: \Delta_m(X) /{\sim}\\ \to \mathcal{F}_m(X)$ given by $\Tilde{\rho}([x])=\rho(x)$ is a homeomorphism.
\end{proposition}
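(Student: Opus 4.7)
The plan is to check that $\tilde{\rho}$ is a continuous bijection and then show it is an open map, the latter being the only delicate part. First, $\tilde{\rho}$ is well-defined and injective by construction, since $\sim$ collapses precisely those sequences with equal value-sets; it is surjective because any $F=\{a_1<\cdots<a_k\}\in\mathcal{F}_m(X)$ is the image of the padded sequence $(a_1,\dots,a_k,a_k,\dots,a_k)\in\Delta_m(X)$. Continuity follows from the universal property of the quotient topology: letting $q\colon\Delta_m(X)\to\Delta_m(X)/{\sim}$ denote the quotient map, $\tilde{\rho}$ is continuous iff $\rho=\tilde{\rho}\circ q$ is, which is immediate from $\rho^{-1}([V])=\Delta_m(X)\cap V^m$ and $\rho^{-1}(\langle V\rangle)=\Delta_m(X)\cap\bigcup_{i<m}\pi_i^{-1}(V)$ for subbasic opens $[V]$ and $\langle V\rangle$, where $\pi_i$ is the $i$-th coordinate projection.

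The core step is openness of $\tilde{\rho}$, equivalently that $\rho$ sends saturated open sets to open sets. Fix a saturated open $U\subset\Delta_m(X)$ and $F=\{a_1<\cdots<a_k\}\in\rho(U)$; saturation forces the finite fiber $\rho^{-1}(F)$ to lie entirely in $U$. Each element $y$ of this fiber is determined by its block structure $I_1^y,\dots,I_k^y$ with $I_j^y=\{i:y(i)=a_j\}$. For every such $y$ I pick a basic open neighborhood $\Delta_m(X)\cap\prod_i V_{y,i}\subset U$ and set
\[
W_j=\bigcap_{y\in\rho^{-1}(F)}\;\bigcap_{i\in I_j^y}V_{y,i},
\]
a finite intersection of open sets containing $a_j$. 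Using Hausdorffness of the order topology I may further shrink the $W_j$'s so that they are pairwise disjoint with every point of $W_j$ strictly less than every point of $W_{j+1}$. My candidate neighborhood of $F$ is
\[
M=\langle W_1\rangle\cap\cdots\cap\langle W_k\rangle\cap[W_1\cup\cdots\cup W_k]\cap\mathcal{F}_m(X),
\]
which is open and contains $F$.

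To verify $M\subset\rho(U)$, take $F'\in M$ and decompose $F'=F'_1\sqcup\cdots\sqcup F'_k$ with $F'_j:=F'\cap W_j$ non-empty (this decomposition is disjoint and ordered thanks to the shrinkage); write $n_j=|F'_j|$, so $\sum n_j\leq m$. Choose the fiber element $y^*\in\rho^{-1}(F)$ with block sizes $(n_1,\dots,n_{k-1},m-\sum_{j<k}n_j)$, a valid composition of $m$ into $k$ positive parts since $n_k\geq 1$ forces the last part to be positive. Build $y'\in\Delta_m(X)$ by placing the elements of $F'_j$ in strictly increasing order into the positions of $I_j^{y^*}$ for $j<k$, and into the first $n_k$ positions of $I_k^{y^*}$, padding the remaining positions of $I_k^{y^*}$ with $\max F'_k$. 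Then $y'$ is weakly increasing, $\rho(y')=F'$, and $y'(i)\in W_j\subset V_{y^*,i}$ for each $i\in I_j^{y^*}$, so $y'\in U$ as required.

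The main obstacle is the simultaneous balancing in the openness step: the $W_j$'s must be small enough to lie inside every relevant $V_{y,i}$ uniformly across the entire fiber $\rho^{-1}(F)$, and they must be order-separated; meanwhile the witness $y^*$ must be chosen adaptively to $F'$ so that its block sizes absorb the decomposition $(n_1,\dots,n_k)$. Once these two choices are reconciled the remaining verifications are routine.
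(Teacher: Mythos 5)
Your proof is correct. Note that the paper itself gives no argument for this proposition: it is quoted as a classical fact with a citation to Ganea, so there is no in-paper proof to match against. The closest internal analogue is the proof of Corollary 4.7 (the $\mathcal{C}_m$ version), which obtains the homeomorphism by a compactness shortcut: $\Delta_{2m}(X)$ is closed in ${}^{2m}X$, hence compact, so the continuous bijection $\Tilde{\varrho}$ is automatically a homeomorphism. That route would also prove Proposition 4.1 when $X$ is compact, but the proposition as stated assumes only that $X$ is linearly ordered, and your direct verification of openness is exactly what is needed to cover the general case. The two delicate points in your argument both check out: the fiber $\rho^{-1}(F)$ is finite (indexed by compositions of $m$ into $|F|$ positive parts) and lies in $U$ by saturation, so the intersections defining the $W_j$ are finite; and the order-separation of the $W_j$ is always achievable in a linearly ordered space (using a point strictly between $a_j$ and $a_{j+1}$ if one exists, and the rays $\mathopen{]}\leftarrow,a_{j+1}\mathclose{[}$ and $\mathopen{]}a_j,\rightarrow\mathclose{[}$ otherwise). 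The adaptive choice of the witness $y^*$ with block sizes $(n_1,\dots,n_{k-1},m-\sum_{j<k}n_j)$ is the right device, and the positivity of the last block follows from $|F'|\leq m$ and $n_k\geq 1$ as you say. In short: correct, self-contained, and strictly more general than the compactness argument the paper uses for the companion result.
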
 

  Let $(X,<)$ be a linearly ordered space. For $m\geq 1$, we denote $\mathcal{C}_m(X)\subset\hyp(X)$ as the hyperspace of all unions of at most $m$ non-empty closed intervals in $X$
  . Let $\varrho:\Delta_{2m}(X)\rightarrow\mathcal{C}_m(X)$ be the map defined by $\varrho(x)=\bigcup_{i\in m}[x(2i),x(2i+1)]$ and let $\approx$ the equivalence relation on $\Delta_{2m}(X)$ defined by $x\approx y$ if and only if $\varrho(x)=\varrho(y)$. Let $p:\Delta_{2m}(X)\to\Delta_{2m}(X)/{\approx}$ be the quotient map. We will sometimes write $[x]$ instead of $p(x)$ to represent the equivalence class. We consider $\Delta_{2m}(X)/{\approx}$ as a topological space with the quotient topology.

\begin{proposition}
If $(X,<)$ is a linearly ordered space, then $\varrho$ is continuous.
\end{proposition}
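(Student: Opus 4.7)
The plan is to show that the $\varrho$-preimage of every Vietoris subbasic open set of $\mathcal{C}_m(X)$ is open in $\Delta_{2m}(X)$, which carries the subspace topology from $X^{2m}$. Two kinds of subbasic sets must be handled, namely $[V]$ for $V$ open in $X$ and $\langle W\rangle$ for $W$ a basic open interval in $X$; in each case I would fix $x$ in the preimage and exhibit a basic product neighborhood $U_0\times\cdots\times U_{2m-1}$ of $x$, with each $U_j$ a basic open set in $X$, whose intersection with $\Delta_{2m}(X)$ maps into the chosen subbasic set under $\varrho$.

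For $[V]$, I would imitate the strategy of Proposition \ref{shom}. Write $V=\bigcup_j V_j$ as a union of basic open intervals; for each $i\in m$, set
\[
B_i \;=\; \bigcup\{V_j : V_j\cap [x(2i),x(2i+1)]\neq\emptyset\}.
\]
The key step is to verify that $B_i$ is order-convex in $X$: for $p<r<q$ with $p,q\in B_i$, either $r$ already lies in $[x(2i),x(2i+1)]\subseteq B_i$, or else $r$ lies strictly to one side of this interval and is trapped inside the basic interval $V_j$ that witnesses the far endpoint's membership in $B_i$ (since that $V_j$ must extend past $r$ to touch $[x(2i),x(2i+1)]$). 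Granting this, $B_i$ is an open convex subset of $V$ containing $[x(2i),x(2i+1)]$; I would then choose basic open neighborhoods $U_{2i},U_{2i+1}$ of $x(2i),x(2i+1)$ inside $B_i$ and set $U_j=X$ for the remaining coordinates. Convexity of each $B_i$ yields $[y(2i),y(2i+1)]\subseteq B_i\subseteq V$ for every $y\in\Delta_{2m}(X)$ in the product neighborhood, so $\varrho(y)\subseteq V$.

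For $\langle W\rangle$ with $W={]}a,b{[}$, I would pick $i^*$ with $[x(2i^*),x(2i^*+1)]\cap W\neq\emptyset$ and split into cases on the positions of $x(2i^*)$ and $x(2i^*+1)$ relative to $W$: if $x(2i^*)\in W$, take $U_{2i^*}=W$; if $x(2i^*+1)\in W$, take $U_{2i^*+1}=W$; otherwise $x(2i^*)\leq a$ and $x(2i^*+1)\geq b$, and I would fix any $p\in W$ and choose basic open neighborhoods $U_{2i^*}\subseteq{]}\leftarrow,p{[}$ and $U_{2i^*+1}\subseteq{]}p,\rightarrow{[}$. In each case every $y\in\Delta_{2m}(X)$ in the resulting product neighborhood (with $U_j=X$ on the remaining coordinates) satisfies $[y(2i^*),y(2i^*+1)]\cap W\neq\emptyset$, giving $\varrho(y)\in\langle W\rangle$.

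The main obstacle is the convexity of $B_i$ in the $[V]$ argument. A closed interval in a general linearly ordered space need not be connected or compact (the double arrow is totally disconnected, and the Sorgenfrey line is not compact), so one cannot invoke connectedness to place $[x(2i),x(2i+1)]$ inside a single $V_j$ nor extract a finite subcover and merge it into an interval; the purely order-theoretic trapping argument sketched above is the natural way to run the proof in the generality of the statement.
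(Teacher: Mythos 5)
Your proposal is correct and follows essentially the same route as the paper: it checks that $\varrho$-preimages of the subbasic sets $[V]$ and $\langle W\rangle$ are open, builds for $[V]$ the same convex open sets $B_i=\bigcup\{V_j: V_j\cap[x(2i),x(2i+1)]\neq\emptyset\}$ (the paper's $W_i$), and handles $\langle W\rangle$ by an equivalent case analysis on where the endpoints of the relevant subinterval sit relative to $W$. The only difference is cosmetic: you spell out the order-theoretic verification that $B_i$ is convex, which the paper asserts without proof, and in the straddling case you use a fixed interior point $p\in W$ where the paper uses the half-lines $]\leftarrow,b[$ and $]a,\rightarrow[$.
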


\begin{proof}
We will prove that the preimages under $\varrho$ of $[V]$ and $\langle W\rangle$, with $V$ an open set and $W$ a basic interval, are open. 

Let $V$ an open set of $X$. There exists basic intervals $V_j$ such that $V=\bigcup_{j\in J} V_j$. Let $x\in \varrho^{-1}([V])=\{y\in \Delta_{2m}(X):\bigcup_{i\in m}[y(2i),y(2i+1)]\subset \bigcup_{j\in J} V_j\}$. For each $i\in m$ we define $W_i=\bigcup\{V_j:[x(2i),x(2i+1)]\cap V_j\neq\emptyset\}$. We have that $W_i$ is an open interval that contains $[x(2i),x(2i+1)]$. Let $y\in \Delta_{2m}(X)\cap \prod_{i\in m}W_i^2$. For all $i$, $y(2i)$ and $y(2i+1)$ are in $W_i$, so $\bigcup_{i\in m}[y(2i),y(2i+1)]\subset \bigcup_{i\in m} W_i\subset \bigcup_{j\in J} V_j=V$ . Therefore, $x\in \Delta_{2m}(X)\cap \prod_{i\in m}W_i^2\subset \varrho^{-1}([V])$ and $\varrho^{-1}([V])$ is open. 

Let $W$ be a basic open interval of $X$ and let $x\in \varrho^{-1}(\langle W\rangle)$ be given. By definition, there exists $j$ such that $[x(2j),x(2j+1)]\cap W\neq\emptyset$. If $W={]}\leftarrow,a{[}$, then we define $B=\prod_{i\in 2m} B_i$ with $B_i=X$ if $i\neq 2j$ and $B_{2j}=W$. If $y\in \Delta_{2m}(X)\cap B$, then $[y(2j),y(2j+1)]\cap W\neq\emptyset$, that is to say $\bigcup_{i\in m}[y(2i),y(2i+1)]\cap W\neq\emptyset$. In this way, $x\in \Delta_{2m}(X)\cap B\subset \varrho^{-1}(\langle W\rangle)$. The proof for $W={]}a,\rightarrow{[}$ is similar. When $W={]}a,b[$ we have two cases.

Case 1. $a<x(2j+1)<b$. Define $B=\prod_{i\in 2m} B_i$ with $B_i=X$ if $i\neq 2j+1$ and $B_{2j+1}=W$. If $y\in \Delta_{2m}(X)\cap B$, then $[y(2j),y(2j+1)]\cap W\neq\emptyset$. We have that $\bigcup_{i\in m}[y(2i),y(2i+1)]\cap W\neq\emptyset$. In this way, $x\in \Delta_{2m}(X)\cap B\subset \varrho^{-1}(\langle W\rangle)$.
 
Case 2. $x(2j+1)\geq b$. Necessarily $x(2j)< b$. Define $B=\prod_{i\in 2m} B_i$ with $B_i=X$ if $i\in 2m\setminus\{2j,2j+1\}$, $B_{2j}={]}\leftarrow,b[$ and $B_{2j+1}={]}a,\rightarrow{[}$. If $y\in \Delta_{2m}(X)\cap B$, then $[y(2j),y(2j+1)]\cap W\neq\emptyset$. Therefore, $x\in \Delta_{2m}(X)\cap B\subset \varrho^{-1}(\langle W\rangle)$.  

We conclude that $\varrho^{-1}(\langle W\rangle)$ is open.
\end{proof}

Analogously to Proposition \ref{qhom}, the following result gives us a more geometric representation of $\mathcal{C}_{m}(X)$.

\begin{corollary}\label{ordchar}
If $(X,<)$ a compact linearly ordered space, then the map $\Tilde{\varrho}:\Delta_{2m}(X)/\approx\ \to\mathcal{C}_m(X)$ given by $\Tilde{\varrho}([x])=\varrho(x)$ is a homeomorphism.
\end{corollary}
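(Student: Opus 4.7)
The plan is to deduce Corollary \ref{ordchar} by applying the standard principle that a continuous bijection from a compact space onto a Hausdorff space is a homeomorphism. By construction $\tilde\varrho\circ p=\varrho$, and the equivalence relation $\approx$ is defined precisely so that its classes are the fibers of $\varrho$; this makes $\tilde\varrho$ well-defined and injective. For surjectivity, any $C\in\mathcal{C}_m(X)$ can be written as $C=\bigcup_{i=1}^{k}[a_i,b_i]$ with $k\leq m$ and $a_1\leq b_1<a_2\leq\cdots<a_k\leq b_k$; padding this representation with the degenerate intervals $\{b_k\}=[b_k,b_k]$ in the remaining $m-k$ slots produces an element of $\Delta_{2m}(X)$ whose image under $\varrho$ is $C$.

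Next I would establish continuity of $\tilde\varrho$. Since $p$ is the quotient map and $\tilde\varrho\circ p=\varrho$ is continuous by the preceding proposition, the universal property of the quotient topology gives continuity of $\tilde\varrho$ at once.

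The remaining step, continuity of $\tilde\varrho^{-1}$, is where compactness enters. The set $\Delta_{2m}(X)$ is the closed subspace of $X^{2m}$ cut out by the inequalities $x(i)\leq x(i+1)$, hence compact, since $X$ (and therefore $X^{2m}$) is compact. Its quotient $\Delta_{2m}(X)/{\approx}$ is consequently also compact. On the other hand, $\mathcal{C}_m(X)\subset\hyp(X)$ is Hausdorff, because the Vietoris hyperspace of a Hausdorff space is Hausdorff. A continuous bijection from a compact space onto a Hausdorff space is automatically a homeomorphism, which finishes the argument.

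I do not expect any serious obstacle here: the real computational content was already invested in the proof of continuity of $\varrho$. The only points that require attention are the padding argument for surjectivity and the observation that $\mathcal{C}_m(X)$ inherits Hausdorffness from $\hyp(X)$, and both are entirely routine.
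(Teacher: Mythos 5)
Your proposal is correct and follows essentially the same route as the paper: $\tilde\varrho$ is a continuous bijection (continuity via the universal property of the quotient, using the continuity of $\varrho$), $\Delta_{2m}(X)/{\approx}$ is compact because $\Delta_{2m}(X)$ is closed in the compact space ${}^{2m}X$, and a continuous bijection from a compact space onto a Hausdorff space is a homeomorphism. The one slip is your appeal to the ``principle'' that the Vietoris hyperspace of a Hausdorff space is Hausdorff --- in general $\hyp(X)$ is Hausdorff only when $X$ is normal --- but since a compact linearly ordered space is compact Hausdorff, hence normal, the fact you need does hold here.
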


\begin{proof}
Since $\varrho$ is continuous, we have that $\Tilde{\varrho}$ is a continuous bijection. Let $x\in {}^{2m}{X}\setminus \Delta_{2m}(X)$. There are $i,j\in 2m$ such that $i<j$ and $x(i)>x(j)$. Since $X$ is Hausdorff, there exists two disjoint basic intervals $V$ and $W$ with $W<V$ such that $x(i)\in V$ and $x(j)\in W$. Let $A=\prod_{k\in 2m}X_k$ an open neighborhood of $x$ with $X_k=X$ for all $k\in 2m\setminus\{i,j\}$, $X_i=V$ and $X_j=W$. We have that $x\in A\subset {}^{2m}{X}\setminus \Delta_{2m}(X)$, so $\Delta_{2m}(X)$ is closed in ${}^{2m}{X}$. Since ${}^{2m}{X}$ is compact, so is $\Delta_{2m}(X)$. Therefore, $\Delta_{2m}(X)/\approx$ is compact and $\Tilde{\varrho}$ is a homeomorphism. 
\end{proof}

\begin{rmk}\label{intervals}
We note that $\Delta_2(X)=\Delta_2(X)/\approx$. By the previous Corollary and Proposition \ref{qhom}, we have that $\mathcal{F}_2(X)\cong \mathcal{C}_1(X)$.
\end{rmk}

Let $\pi: \mathbb{A}\to [0,1]$ be the projection onto the first factor $\pi(\langle x,r\rangle )=x$. For any $a\in \mathbb{A}$ we will denote by $\overline{a}$ the constant sequence $a$ of finite length $m$, where the value of $m$ should be understood by context. Let $\pi_i:{}^m{\mathbb{A}}\to \mathbb{A}$ be the projection onto the $i$-th coordinate, and  for any function $h:{}^m{\mathbb{A}}\to {}^m{\mathbb{A}}$, let $h_i=\pi_i\circ h$ denote its $i$-th coordinate function. Recall that a partial function $f:\mathbb{A}\to \mathbb{A}$ is monotone if it is either non-decreasing or non-increasing, and  $f$ is strictly monotone if it is either strictly increasing or strictly decreasing.
	
We recall the following results. 
 
	\begin{proposition}[\cite{bm} Proposition 2.2]\label{basiclema}
		Let $h:\mathbb{A}\to\mathbb{A}$ be a monotone continuous function. Then there is a clopen interval $J$ so that either $h\restriction J$ is constant or $h\restriction J$ is strictly monotone.
	\end{proposition}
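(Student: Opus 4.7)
The plan is to assume without loss of generality that $h$ is non-decreasing (composing with the order-reversing self-homeomorphism $\langle x,r\rangle \mapsto \langle 1-x, 1-r\rangle$ of $\mathbb{A}$ if necessary), so that each fiber $C_y := h^{-1}(y)$ is a closed subinterval of $\mathbb{A}$. The first case is easy: if some $C_y$ has non-empty interior, then since the clopen intervals $[\langle a,1\rangle, \langle b,0\rangle]$ with $a<b$ form a base for the topology of $\mathbb{A}$, this interior contains such an interval $J$, on which $h$ is constant, and we are done.

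Assume now that no fiber has non-empty interior; I aim to locate a clopen interval on which $h$ is strictly increasing. The key geometric observation is that any interval $[p,q]\subseteq \mathbb{A}$ with $\pi(p)<\pi(q)$ contains a clopen subinterval (choose $\pi(p)<c<c'<\pi(q)$; then $[\langle c,1\rangle, \langle c',0\rangle]\subseteq [p,q]$). Combined with the working hypothesis, this forces every non-trivial fiber to be a two-point set $\{\langle x,0\rangle, \langle x,1\rangle\}$ for some $x\in(0,1)$; let $B\subset (0,1)$ denote the set of such ``bad'' $x$. If I can locate an open interval $(a,b)\subset(0,1)$ disjoint from $B$, then $J=[\langle a,1\rangle, \langle b,0\rangle]$ is the desired clopen interval, since any two points of $J$ with equal $h$-values would belong to a non-trivial fiber and would force a bad $x$ in $(a,b)$.

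The main obstacle is showing that $B$ is nowhere dense. I would argue by contradiction: suppose $B$ is dense in some $(a,b)\subset(0,1)$ and fix $x_0\in B\cap(a,b)$, with common value $y_{x_0}:=h(\langle x_0,0\rangle)=h(\langle x_0,1\rangle)$. Monotonicity together with the non-interior hypothesis first forces the map $x\mapsto y_x:=h(\langle x,0\rangle)$ to be \emph{strictly} increasing on $B$: were $y_{x_1}=y_{x_2}$ for $x_1<x_2$ in $B$, then $h$ would be constant on $[\langle x_1,0\rangle,\langle x_2,1\rangle]$, yielding a fiber with non-empty interior. Using density, pick $B$-sequences $x_n\nearrow x_0$ and $x_m'\searrow x_0$. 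Since $\langle x_n,1\rangle\to \langle x_0,0\rangle$ and $\langle x_m',0\rangle\to \langle x_0,1\rangle$ in $\mathbb{A}$, continuity of $h$ yields $y_{x_n}\to y_{x_0}$ from strictly below and $y_{x_m'}\to y_{x_0}$ from strictly above. The contradiction now comes from the one-sided character of the double arrow topology: a point of the form $\langle t,1\rangle$ has $\langle t,0\rangle$ as its immediate predecessor and hence cannot be the limit of a sequence remaining strictly below it, while symmetrically $\langle t,0\rangle$ cannot be approached from strictly above. Thus $y_{x_0}$ would have to be of both forms simultaneously, which is absurd. Once $B$ is known to be nowhere dense, I choose any $(a,b)\subseteq (0,1)$ disjoint from $B$ and take $J=[\langle a,1\rangle, \langle b,0\rangle]$. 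The subtlety is entirely concentrated in the order-theoretic asymmetry of $\mathbb{A}$ at each point, which is what ultimately rules out pathological densely-bad behavior of $h$.
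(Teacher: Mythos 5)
Your argument is correct and complete; note that the paper itself does not prove this statement but merely recalls it from [BM23, Proposition 2.2], so there is no internal proof to compare against. Your proof is a legitimate self-contained verification, and its structure is the natural one: reduce to the non-decreasing case via the order-reversing flip $\langle x,r\rangle\mapsto\langle 1-x,1-r\rangle$; observe that fibers are closed order-convex sets, hence intervals $[p,q]$, and that $\pi(p)<\pi(q)$ would force a clopen (hence non-empty-interior) subinterval inside the fiber; so in the non-constant case every non-trivial fiber is a ``double'' $\{\langle x,0\rangle,\langle x,1\rangle\}$. The genuinely non-trivial step is ruling out that the set $B$ of such doubled fibers is somewhere dense, and your contradiction is sound: strict monotonicity of $x\mapsto y_x$ on $B$ (else a fiber would swallow a clopen interval), the convergences $\langle x_n,1\rangle\to\langle x_0,0\rangle$ and $\langle x_m',0\rangle\to\langle x_0,1\rangle$, and the fact that $x_0\in B$ makes both limits equal to the single value $y_{x_0}$, which would then have to be approachable both from strictly below and from strictly above --- impossible in $\mathbb{A}$, where points $\langle t,1\rangle$ have no approximating sequences from below and points $\langle t,0\rangle$ none from above. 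The only spots a referee would ask you to spell out are (i) that closed order-convex subsets of the compact LOTS $\mathbb{A}$ really are of the form $[p,q]$ (use existence of min and max), and (ii) that both points of each doubled fiber take the same value $y_{x_n}$, which is what lets continuity along $\langle x_n,1\rangle\to\langle x_0,0\rangle$ deliver $y_{x_n}\to y_{x_0}$; both are immediate.
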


    \begin{proposition}[\cite{bm} Proposition 3.2]\label{clopensubsets}
    Every clopen subset of ${}^m{\mathbb{A}}$ is homeomorphic to ${}^m{\mathbb{A}}$.
    \end{proposition}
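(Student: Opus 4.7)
The plan is to exploit the zero-dimensionality of $\A$ together with the self-similarity $\A \cong \A \sqcup \A$. Fix a clopen subset $U \subset {}^m\A$. Since $\A$ has a base of clopen intervals of the form $[\langle a,1\rangle,\langle b,0\rangle]$, the product ${}^m\A$ has a base of clopen \emph{boxes} $I_1 \times \cdots \times I_m$ with each $I_j$ a clopen interval of $\A$. As $U$ is closed in the compact space ${}^m\A$ it is compact, and since it is open it is covered by finitely many such boxes $B_1,\ldots,B_n$.

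Next I would turn this finite cover into a clopen \emph{partition}. Coordinate by coordinate, the finite family of clopen intervals appearing as the $i$-th factor of some $B_j$ can be refined to a finite disjoint partition of $\A$ into clopen subintervals (sort the endpoints and take consecutive clopen intervals between them). Taking the product of these one-dimensional partitions produces a finite clopen partition $\mathcal{P}$ of ${}^m\A$ into pairwise disjoint boxes, and each $B_j$, hence $U$ itself, is a union of members of $\mathcal{P}$. Thus $U = R_1 \sqcup \cdots \sqcup R_k$ where each $R_l$ is a clopen box.

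Every non-degenerate clopen interval $[\langle a,1\rangle,\langle b,0\rangle]$ of $\A$ is order-isomorphic, and therefore homeomorphic, to $\A$: the affine rescaling $t \mapsto (t-a)/(b-a)$ of the first coordinate identifies $]a,b]\times\{0\}$ with $\A_0$ and $[a,b[\times\{1\}$ with $\A_1$. Consequently each box $R_l = I_1^l \times \cdots \times I_m^l$ is homeomorphic to ${}^m\A$, and $U$ is homeomorphic to a disjoint sum of $k$ copies of ${}^m\A$.

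The remaining step, which I regard as the main point, is to collapse such a finite disjoint sum to a single copy. Splitting $\A = [\langle 0,1\rangle,\langle 1/2,0\rangle] \cup [\langle 1/2,1\rangle,\langle 1,0\rangle]$ exhibits $\A \cong \A \sqcup \A$, since both halves are non-degenerate clopen intervals and hence homeomorphic to $\A$ by the previous paragraph. Multiplying by ${}^{m-1}\A$ yields ${}^m\A \cong {}^m\A \sqcup {}^m\A$, and induction on $k$ finishes the argument. The genuine content is this self-similarity of $\A$; the rest is a standard refinement argument available in any compact zero-dimensional product.
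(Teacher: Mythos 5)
Your argument is correct, but note that the paper does not prove this statement itself: it is quoted verbatim as Proposition 3.2 of \cite{bm}, so there is no in-paper proof to compare against. Your route --- cover the clopen set by basic clopen boxes contained in it (openness plus compactness gives $U=B_1\cup\dots\cup B_n$ with each $B_j\subseteq U$, which is what makes the later claim that $U$ is a union of members of the refined partition legitimate), refine to a finite disjoint union of boxes, identify each non-degenerate clopen interval $[\langle a,1\rangle,\langle b,0\rangle]$ with $\mathbb{A}$ by affine rescaling, and absorb finitely many summands via ${}^m\mathbb{A}\cong{}^m\mathbb{A}\sqcup{}^m\mathbb{A}$ --- is the standard argument for this fact and is the one used in the cited source.
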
	
    
	\begin{lemma}
		If $\mathcal{C}_m(\A)$ is homogeneous, then it is homeomorphic to ${}^{2m}{\mathbb{A}}$.	
	\end{lemma}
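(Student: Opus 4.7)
The approach is to exhibit a clopen subset of $\mathcal{C}_m(\A)$ homeomorphic to ${}^{2m}{\mathbb{A}}$, and then leverage homogeneity and compactness to spread this model across the whole space. To build the clopen model, I would pick $2m$ pairwise disjoint clopen intervals $I_0 < I_1 < \cdots < I_{2m-1}$ in $\A$ (for example $I_k=[\langle a_k,1\rangle,\langle b_k,0\rangle]$ with $0<a_0<b_0<\cdots<a_{2m-1}<b_{2m-1}<1$), arranged so that each gap between consecutive $I_k$'s contains an honest point of $\A$. Using Corollary \ref{ordchar}, identify $\mathcal{C}_m(\A)$ with $\Delta_{2m}(\mathbb{A})/{\approx}$; then $U:=\prod_k I_k$ is a clopen subset of $\Delta_{2m}(\mathbb{A})$. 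The key check is that $U$ is $\approx$-saturated: for $x\in U$, the set $\varrho(x)$ consists of $m$ disjoint nondegenerate intervals separated by points of $\A\setminus\varrho(x)$, so no $y$-interval $[y_{2j},y_{2j+1}]$ from a $y\in\Delta_{2m}(\mathbb{A})$ with $\varrho(y)=\varrho(x)$ can straddle two of them, and a counting argument using the ordering of $y$ together with the fact that all $m$ blocks must be covered forces $y=x$. Hence the quotient map restricts to a continuous bijection $U\to \tilde U\subset\mathcal{C}_m(\A)$ from a compact space to a Hausdorff one, and so is a homeomorphism with $\tilde U$ clopen. Since each $I_k$ is clopen in $\A$ and hence $I_k\cong\A$ by Proposition \ref{clopensubsets} (with $m=1$), we obtain $\tilde U\cong U\cong {}^{2m}{\mathbb{A}}$.

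Now assume $\mathcal{C}_m(\A)$ is homogeneous. Fix $F_0\in\tilde U$; for every $F\in\mathcal{C}_m(\A)$ take an autohomeomorphism $h_F$ with $h_F(F_0)=F$, so that $h_F(\tilde U)$ is a clopen neighborhood of $F$ homeomorphic to ${}^{2m}{\mathbb{A}}$. By compactness, finitely many such neighborhoods $V_1,\dots,V_k$ cover $\mathcal{C}_m(\A)$. Disjointifying via $V_i':=V_i\setminus(V_1\cup\cdots\cup V_{i-1})$ yields a partition of $\mathcal{C}_m(\A)$ into pairwise disjoint clopens, each $V_i'$ a clopen subset of $V_i\cong {}^{2m}{\mathbb{A}}$ and hence itself homeomorphic to ${}^{2m}{\mathbb{A}}$ (or empty) by Proposition \ref{clopensubsets} again. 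It remains to observe that any finite disjoint union of copies of ${}^{2m}{\mathbb{A}}$ is homeomorphic to ${}^{2m}{\mathbb{A}}$: given any proper nonempty clopen $W\subset {}^{2m}{\mathbb{A}}$, both $W$ and ${}^{2m}{\mathbb{A}}\setminus W$ are copies of ${}^{2m}{\mathbb{A}}$ by Proposition \ref{clopensubsets}, and iterating this splitting produces a partition of ${}^{2m}{\mathbb{A}}$ into any prescribed finite number of clopen copies, a process that can equally well be read in reverse. This yields $\mathcal{C}_m(\A)\cong {}^{2m}{\mathbb{A}}$.

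The main obstacle is the saturation verification for $U$: one has to choose the intervals $I_k$ with enough breathing room that the equivalence $\approx$ cannot identify tuples by merging subintervals across a ``touching'' point of $\A$ or by letting an interval collapse to a point that lies inside another block. Once that bookkeeping is handled, the rest of the argument is a standard combination of homogeneity, compactness, and the self-similarity of ${}^{2m}{\mathbb{A}}$ supplied by Proposition \ref{clopensubsets}.
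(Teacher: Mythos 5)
Your argument is correct, but it reaches the conclusion by a different mechanism than the paper. The shared ingredient is the observation that a product of $2m$ pairwise separated clopen intervals is a saturated clopen subset of $\Delta_{2m}(\mathbb{A})$ on which the quotient map is injective, hence a clopen copy of ${}^{2m}\mathbb{A}$ inside $\mathcal{C}_m(\mathbb{A})$; your saturation/pigeonhole verification (each $y$-block lies in a unique convex component of $\varrho(x)$, surjectivity onto the $m$ components forces a bijection, and the ordering forces it to be the identity) is sound. From there the two proofs diverge. The paper applies homogeneity exactly once, to move the diagonal point $[\overline{\langle 0,1\rangle}]$ onto a generic point $[x]$ sitting inside such a cube $\prod_i J_i$; since a small saturated cube ${}^{2m}[\langle 0,1\rangle,\langle\epsilon,0\rangle]/{\approx}$ around the diagonal point is homeomorphic to all of $\mathcal{C}_m(\mathbb{A})$, continuity pushes a clopen copy of the \emph{entire} hyperspace into $\prod_i J_i\cong{}^{2m}\mathbb{A}$, and Proposition \ref{clopensubsets} finishes. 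You instead use homogeneity at every point to produce a clopen cover of $\mathcal{C}_m(\mathbb{A})$ by copies of ${}^{2m}\mathbb{A}$, extract a finite subcover by compactness, disjointify, and then invoke the fact that a finite disjoint union of copies of ${}^{2m}\mathbb{A}$ is again ${}^{2m}\mathbb{A}$ (which your splitting argument does establish, since ${}^{2m}\mathbb{A}$ is zero-dimensional and Proposition \ref{clopensubsets} applies to both halves of any proper clopen partition). The paper's route is shorter because it exploits the self-similarity of $\mathcal{C}_m(\mathbb{A})$ at the most degenerate point; yours is more generic, needing no distinguished point but requiring compactness and the pasting step, and it is the standard template for identifying a homogeneous zero-dimensional compactum with a self-similar model. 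Both are complete proofs.
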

 
	\begin{proof}
		Suppose $\mathcal{C}_m(\A)$ is homogeneous, then there is an autohomeomorphism $h:\Delta_{2m}/{\approx} \to \Delta_{2m}/{\approx}$ such that $h([\overline{\langle 0,1\rangle}])=[x],$ where $x$ is some fixed point such that $\pi(x(0))<\pi(x(1))<\dots<\pi(x(2m-1))$. On one hand, notice that, if  $J_0<\dots<J_{2m-1}$ is a sequence of pairwise disjoint clopen intervals with $x(i)\in J_i$ for $i\in 2m$ and $\max(\pi(J_i))<\min(\pi(J_{i+1}))$ for $i\in 2m-1$, then $p\restriction \prod\limits_{i\in 2m} J_i:\prod\limits_{i\in 2m}J_i\to \Delta_{2m}/{\approx}$ is an embedding. On the other hand, observe that for any $0<\epsilon<1$ the clopen cube ${}^{2m}{[\langle 0,1\rangle,\langle\epsilon,0\rangle]}$ is a saturated neighborhood of $\overline{\langle 0,1\rangle} $ such that $p''({}^{2m}{[\langle 0,1\rangle,\langle\epsilon,0\rangle]})$ is homeomorphic to $\Delta_{2m}/{\approx}.$ Since $h$ is continuous, there is an $\epsilon>0$ such that $h''({}^{2m}{[\langle 0,1\rangle,\langle\epsilon,0\rangle]}/\approx )\subseteq \prod\limits_{i\in {2m}} J_i.$ Thus, we have  that $${}^{2m}{\mathbb{A}}\cong \prod\limits_{i\in {2m}} J_i\cong h''({}^{2m}{[\langle 0,1\rangle,\langle\epsilon,0\rangle]}/{\approx} )\cong {}^{2m}{[\langle 0,1\rangle,\langle\epsilon,0\rangle]}/{\approx}\cong \Delta_{2m}/{\approx}$$ 
	where the second homeomorphism follows from Proposition \ref{clopensubsets}.
	\end{proof}

    We are now ready to prove the main result of the section.	
    \bigskip
	
\noindent{\bf Proof of Theorem \ref{main4}:}

\begin{proof}
		We proceed by contradiction. Suppose that there is a homeomorphism $h: \Delta_{2m}/{\approx} \to {}^{2m}{\mathbb{A}},$ and let $\Gamma=\{[\overline{x}]\in \Delta_{2m}/{\approx} : x\in\mathbb{A}\}.  $ Recall that the diagonal $\{(x,x)\in {}^2{\mathbb{A}}: x\in \mathbb{A}\}$ is not a $G_\delta$ subspace as $\mathbb{A}$ is a non-metrizable compact space.  It follows from this that $\Gamma$ is not a $G_\delta$ in  $\Delta_{2m}/{\approx}$ as otherwise this would imply that $\pi_{\{0,1\}}''(p^{-1}(\Gamma))=\pi_{\{0,1\}}''(\{\overline{x}\in\Delta_{2m}:x\in\mathbb{A}\})=\{(x,x)\in {}^2{\mathbb{A}}: x\in \mathbb{A}\}$ would also be one, where $\pi_{\{0,1\}}:{}^{2m}{\mathbb{A}}\to{}^2{\mathbb{A}} $ denotes the projection onto the first $2$ coordinates. Notice that, since $\mathbb{A}\times {}^{2m-1}\{\langle 0,1\rangle\}=\bigcap_{n\in\omega}\mathbb{A}\times {}^{2m-1}\{[\langle 0,1\rangle,\langle \frac{1}{n},0\rangle]\}\subseteq {}^{2m}{\mathbb{A}}$ and $\mathbb{A}$ is a perfect space, then every closed subset of $\mathbb{A}\times {}^{2m-1}\{\langle 0,1\rangle\}$ is a $G_\delta $ set in ${}^{2m}{\mathbb{A}}.$ Analogously, every closed subset of any line parallel to one of coordinates axis, is also  a $G_\delta $ set in ${}^{2m}{\mathbb{A}}$. We now  consider the embedding $\alpha: \mathbb{A}\to {}^{2m}{\mathbb{A}}$ given by $\alpha(x)=h([\overline{x}])$. By applying Proposition \ref{basiclema} $2m$-times, we can find a clopen interval $J$ such that $\alpha_{j}:=\pi_{j}\circ \alpha\restriction J$ is monotone for every $j\in 2m.$ Since $h''(\Gamma)$ is not a $G_\delta$ in $^{2m}\mathbb{A},$ it follows, by our previous observations, that there exists $j_0\ne j_1\in 2m$  such that $\alpha_{j_0}$ and $\alpha_{j_1}$ are strictly monotone restricted to $J$. We will assume that both $\alpha_{j_0}\restriction J, \alpha_{j_1}\restriction J$ are strictly increasing, as the other cases are analogous. 

  The proof of the following result is is analogous to the proof of [\cite{bm}, Claim 3.5].
  
		\begin{claim}
			There is a countable subset $C\subseteq \pi''(J)$ such that $$\pi(\alpha_{j_0}(\langle a,0\rangle))=\pi(\alpha_{j_0}(\langle a,1\rangle))$$ and $$\pi(\alpha_{j_1}(\langle a,0\rangle))=\pi(\alpha_{j_1}(\langle a,1\rangle))$$ for any $a\in \pi''(J)\setminus C.$ In other words,  $\alpha_{j_k}(\langle a,1\rangle)$  is the immediate successor of  $\alpha_{j_k}(\langle a,0\rangle)$ for $k\in 2.$
		\end{claim}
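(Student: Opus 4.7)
The plan is to reduce the statement to the standard fact that a non-decreasing map into $[0,1]$ has only countably many jumps, applied separately for each $k \in 2$ and then combined. Set $f_k := \pi \circ \alpha_{j_k} \restriction J$. Since $\pi$ is order-preserving on $\mathbb{A}$ and $\alpha_{j_k} \restriction J$ is strictly increasing, $f_k$ is non-decreasing into the closed interval $\pi''(J) \subseteq [0,1]$. For each $a$ in the interior of $\pi''(J)$ both $\langle a,0\rangle$ and $\langle a,1\rangle$ lie in $J$; by strict monotonicity of $\alpha_{j_k}$, the images $\alpha_{j_k}(\langle a,0\rangle) \prec \alpha_{j_k}(\langle a,1\rangle)$ are distinct, so the desired equality at $a$ becomes $f_k(\langle a,0\rangle) = f_k(\langle a,1\rangle)$.

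For each such $a$ define the (possibly empty) open interval
$$U_a^k := \bigl(f_k(\langle a,0\rangle),\ f_k(\langle a,1\rangle)\bigr) \subseteq [0,1],$$
which is non-empty precisely when the equality fails at $a$. If $a < b$ both lie in the interior of $\pi''(J)$, then $\langle a,1\rangle \prec \langle b,0\rangle$ in $\mathbb{A}$, so by monotonicity $f_k(\langle a,1\rangle) \leq f_k(\langle b,0\rangle)$, placing $U_a^k$ entirely to the left of $U_b^k$. Hence the non-empty members of $\{U_a^k : a \in \pi''(J)\}$ form a pairwise disjoint family of open intervals in $[0,1]$, which must be countable by separability. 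Let $C_k$ consist of those $a$ with $U_a^k \neq \emptyset$, together with the (at most two) boundary points of $\pi''(J)$; then $C_k$ is countable and $C := C_0 \cup C_1$ is the desired set.

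The "immediate successor" reformulation is then automatic: for $a \in \pi''(J) \setminus C$, the two points $\alpha_{j_k}(\langle a,0\rangle) \prec \alpha_{j_k}(\langle a,1\rangle)$ are distinct yet share a common first coordinate $c$, and the only two elements of $\mathbb{A}$ projecting to $c$ are $\langle c,0\rangle$ and $\langle c,1\rangle$, which are immediate neighbors in the order of $\mathbb{A}$. I do not anticipate a serious obstacle: the argument is essentially the classical jumps-of-a-monotone-map lemma dressed up in the ordered-space language, and the only mild care needed is to absorb the at most two boundary points of $\pi''(J)$ into $C$ and to verify that the disjointness step genuinely uses the order-relation $\langle a,1\rangle \prec \langle b,0\rangle$ at which the two "arrows" of $\mathbb{A}$ pass each other.
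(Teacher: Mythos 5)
Your argument is correct and is essentially the same jump-counting proof that the paper delegates, by citation, to the analogue in [BM23, Claim 3.5]: the failures of the equality for $\alpha_{j_k}$ yield a pairwise disjoint family of non-empty open subintervals of $[0,1]$, which must be countable by separability, and taking the union over $k\in 2$ (plus the at most two endpoints of $\pi''(J)$, where only one of $\langle a,0\rangle,\langle a,1\rangle$ lies in $J$) gives $C$. The immediate-successor reformulation also follows as you say, since two distinct points of $\mathbb{A}$ with the same first coordinate can only be $\langle c,0\rangle$ and $\langle c,1\rangle$.
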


		For each $a \in A:=\pi''(J)\setminus C,$ let $P_a^-=\alpha(\langle a,0\rangle), Q_a^+=\alpha(\langle a,1\rangle)$ and let 
		$$P_a^+=\alpha(\langle a,0\rangle)\restriction_{(2m\setminus\{j_0\})}\cup\ (j_0,\langle \pi(\alpha_{j_0}(\langle a,0\rangle )),1\rangle)$$
		and 
		$$Q_a^-=\alpha(\langle a,1\rangle)\restriction_{(2m\setminus\{j_1\})}\cup\ (j_1,\langle \pi(\alpha_{j_1}(\langle a,0\rangle )),0\rangle).$$
		
		Pick an element $[x_a]$ belonging to $h^{-1}(\{P^+_a,Q^-_a\})\setminus \Tilde{\varrho}^{-1}([\langle a,0\rangle,\langle a,1\rangle]). $ 
		Observe that, by our choice of $x_a,$  there is a $\ell_a\in 2m $ so that $\pi(x_a(\ell_a))\ne a.$  Let $$A^{P,<}=\{a\in A: h([x_a])=P^+_a, \pi(x_a(\ell_a))< a \},$$ $$ A^{P,>}=\{a\in A: h([x_a])=P^+_a, \pi(x_a(\ell_a))> a \},$$ $$ A^{Q,<}=\{a\in A: h([x_a])=Q^-_a, \pi(x_a(\ell_a))< a\}$$ and $$A^{Q,>}=\{a\in A: h([x_a])=Q^-_a, \pi(x_a(\ell_a))>a\}.$$  We may assume, without loss of generality, that $A^{P,<}$ is uncountable as the other cases are similar.  By successively refining $A^{P,<}$, we can find an uncountable subset $B\subseteq A^{P,<},$ a natural number $\ell$  and a rational number $ r\in \mathbb{Q}$ such that $\ell_a=\ell$ and $\pi(x_a(\ell))<r<a$ for any $a\in B.$
		
		Consider the clopen sets $$U:=\bigcup\limits_{j\in 2m}\pi_j^{-1}( [\langle 0,1\rangle, \langle r,0\rangle ]) \ {\rm and}\ V:=\bigcap\limits_{j\in 2m} \pi_j^{-1}([\langle r,1\rangle, \langle 1,0\rangle ]).$$ 
  
  \begin{claim}
   The sets $U$ and $V$ are saturated.   
  \end{claim}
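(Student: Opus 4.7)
The plan is to verify saturation directly from the definition of $\approx$: for every $x \in U \cap \Delta_{2m}$ and every $y \in \Delta_{2m}$ with $y \approx x$, I must show $y \in U$, and analogously for $V$. Recall that $\approx$ is defined by $x \approx y$ iff $\varrho(x) = \varrho(y)$, so the proof reduces to expressing membership in $U$ and $V$ as a property of the underlying set $\varrho(x)$ rather than of the tuple $x$.

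The first step is to describe $U \cap \Delta_{2m}$ and $V \cap \Delta_{2m}$ in this more intrinsic way. Because elements of $\Delta_{2m}$ have non-decreasing coordinates, the condition defining $U$ (some $\pi_j(x) \leq \langle r, 0\rangle$) is equivalent to $x(0) \leq \langle r, 0\rangle$; similarly, the condition defining $V$ (all $\pi_j(x) \geq \langle r, 1\rangle$) is equivalent to $x(0) \geq \langle r, 1\rangle$, which in turn is equivalent to $x(2m-1) \geq \langle r, 1\rangle$ on $\Delta_{2m}$ together with the complementary bound. The key observation is that $x(0) = \min \varrho(x)$ and $x(2m-1) = \max \varrho(x)$: every point of $\varrho(x) = \bigcup_{i \in m}[x(2i), x(2i+1)]$ lies in some $[x(2i), x(2i+1)]$, so it is bounded below by $x(2i) \geq x(0)$ and above by $x(2i+1) \leq x(2m-1)$, and both extremes are realized since $x(0) \in [x(0), x(1)] \subseteq \varrho(x)$ and $x(2m-1) \in [x(2m-2), x(2m-1)] \subseteq \varrho(x)$. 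This yields
\[U \cap \Delta_{2m} = \{x \in \Delta_{2m} : \min \varrho(x) \leq \langle r, 0\rangle\}\]
and
\[V \cap \Delta_{2m} = \{x \in \Delta_{2m} : \max \varrho(x) \geq \langle r, 1\rangle\}.\]

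Once these descriptions are in hand, saturation is immediate. The quantities $\min \varrho(x)$ and $\max \varrho(x)$ are $\approx$-invariants by construction, since $x \approx y$ means exactly $\varrho(x) = \varrho(y)$. Therefore if $x \in U \cap \Delta_{2m}$ and $y \in \Delta_{2m}$ with $y \approx x$, then $\min \varrho(y) = \min \varrho(x) \leq \langle r, 0 \rangle$, so $y \in U$; the argument for $V$ is identical with $\max$ in place of $\min$. There is no genuine obstacle here, as the claim is essentially a consequence of the monotone shape of tuples in $\Delta_{2m}$; the only point deserving attention is noticing that, on $\Delta_{2m}$, the conditions defining $U$ and $V$ depend only on the extreme coordinates, and these coincide with $\min$ and $\max$ of $\varrho(x)$.
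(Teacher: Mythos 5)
Your treatment of $U$ is correct and is essentially the paper's argument in disguise: the paper takes $y\in U$ with $y(j)\le\langle r,0\rangle$ and uses $y(j)\in\varrho(x)$ to find $k$ with $x(2k)\le\langle r,0\rangle$, which is exactly the observation that membership in $U$ depends only on whether $\varrho(x)$ meets $[\langle 0,1\rangle,\langle r,0\rangle]$, i.e.\ on $\min\varrho(x)$.

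For $V$, however, your stated description is wrong. The set $V=\bigcap_{j\in 2m}\pi_j^{-1}([\langle r,1\rangle,\langle 1,0\rangle])$ requires \emph{every} coordinate to be $\ge\langle r,1\rangle$, which on $\Delta_{2m}$ is equivalent to $x(0)\ge\langle r,1\rangle$, i.e.\ $\min\varrho(x)\ge\langle r,1\rangle$ (equivalently $\varrho(x)\subseteq[\langle r,1\rangle,\langle 1,0\rangle]$). Your displayed identity $V\cap\Delta_{2m}=\{x\in\Delta_{2m}:\max\varrho(x)\ge\langle r,1\rangle\}$ is false: for $m=1$ the tuple $x=(\langle 0,1\rangle,\langle 1,0\rangle)$ satisfies $\max\varrho(x)=\langle 1,0\rangle\ge\langle r,1\rangle$ but $x(0)=\langle 0,1\rangle\notin[\langle r,1\rangle,\langle 1,0\rangle]$, so $x\notin V$. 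The intermediate assertion that ``$x(0)\ge\langle r,1\rangle$ is equivalent to $x(2m-1)\ge\langle r,1\rangle$'' is where the slip occurs; these are not equivalent ($x(0)\ge\langle r,1\rangle$ is strictly stronger). The fix is immediate: replace $\max$ by $\min$, since $\min\varrho(x)$ is just as much a $\approx$-invariant, and then the saturation of $V$ follows exactly as for $U$. With that one correction your proof is complete and coincides in substance with the paper's (which phrases the $V$ case as $\varrho(x)=\varrho(y)\subseteq[\langle r,1\rangle,\langle 1,0\rangle]$, forcing every $x(j)\in[\langle r,1\rangle,\langle 1,0\rangle]$).
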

  
    \begin{proof}
        Let $x\in p^{-1}(p''(U))$. There is $y\in U$ such that $\bigcup_{i\in m}[x(2i),x(2i+1)]=\bigcup_{i\in m}[y(2i),y(2i+1)]$. Since there is $j\in 2m$ and $k\in m$ with $\langle 0,1\rangle\leq y(j)\leq\langle r,0\rangle$ and $y(j)\in [x(2k),x(2k+1)]$, then $\langle 0,1\rangle\leq x(2k)\leq\langle r,0\rangle$. Thus, $x\in U$.

        Let $x\in p^{-1}(p''(V))$. There is $y\in V$ such that $\bigcup_{i\in m}[x(2i),x(2i+1)]=\bigcup_{i\in m}[y(2i),$ $y(2i+1)]$. Since $y(j)\in [\langle r,1\rangle, \langle 1,0\rangle ]$ for any $j\in 2m$, we have that $\bigcup_{i\in m}[x(2i),x(2i+1)]\subset [\langle r,1\rangle, \langle 1,0\rangle ]$ for any $j\in 2m$. It follows that $x(j)\in [\langle r,1\rangle, \langle 1,0\rangle ]$ for any $j\in 2m$, that is to say, $x\in V$. 
    \end{proof}
  
  We have that $\Tilde{U}:=p''(U)$ and $\Tilde{V}:=p''(V)$ form a clopen partition of $\Delta_{2m}/{\approx}.$  Notice that $X:=\{[x_a]: a\in B\}\subset \Tilde{U} $ and $Y:=\{[\overline{\langle a, 0\rangle}]: a\in B\}\subset \Tilde{V}$. Since $B$ is infinite (uncountable) and $\Delta_{2m}/{\approx}$ is compact, then the set of accumulation points $X'$ and $Y'$ of $X$ and $Y$, respectively, are both non-empty. It follows that $X'\cap Y'=\emptyset$. 
  
  \begin{claim}
      The sets $h''(X)=\{P_a^+: a\in B\}$ and $h''(Y):=\{P_a^-: a\in B \}$  have the same accumulations points. 
  \end{claim}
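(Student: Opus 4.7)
The plan is to exploit the fact that, for each $a\in B$, the tuples $P_a^+$ and $P_a^-$ agree at every coordinate $j\ne j_0$, and at coordinate $j_0$ take two ``consecutive'' values of $\mathbb{A}$. Indeed, the previous claim gives $\pi(\alpha_{j_0}(\langle a,0\rangle))=\pi(\alpha_{j_0}(\langle a,1\rangle))$ for $a\in B\subseteq A$, and since $\alpha_{j_0}\restriction J$ is strictly increasing these two values must be $\langle c_a,0\rangle$ and $\langle c_a,1\rangle$ respectively for some $c_a\in[0,1]$. By the definition of $P_a^+$, this forces $P_a^+(j_0)=\langle c_a,1\rangle$ and $P_a^-(j_0)=\langle c_a,0\rangle$, while $P_a^+(j)=P_a^-(j)$ for every other coordinate $j$.

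With this in hand, I would show that the two sets have the same accumulation points by a direct symmetric argument. Since $h''(X)$ and $h''(Y)$ agree coordinatewise off $j_0$, the only verification concerns the $j_0$-th coordinate: given an accumulation point $z$ of $h''(X)$ (the reverse direction being symmetric) and any basic open box $\prod_j U_j$ in ${}^{2m}{\mathbb{A}}$ containing $z$, the task reduces to exhibiting infinitely many $a\in B$, among those for which $P_a^+\in\prod_j U_j$, that also satisfy $\langle c_a,0\rangle\in U_{j_0}$.

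This boils down to an elementary neighborhood analysis in $\mathbb{A}$ at the point $z(j_0)$. If $z(j_0)=\langle c,0\rangle$, a basic neighborhood has the form $U_{j_0}=]\langle b,t\rangle,\langle c,0\rangle]$, and the condition $\langle c_a,1\rangle\in U_{j_0}$ forces $c_a<c$, which immediately yields $\langle c_a,0\rangle\in U_{j_0}$. If instead $z(j_0)=\langle c,1\rangle$, a basic neighborhood has the form $U_{j_0}=[\langle c,1\rangle,\langle d,t\rangle[$, and $\langle c_a,1\rangle\in U_{j_0}$ only imposes $c_a\geq c$; however, strict monotonicity (hence injectivity) of $\alpha_{j_0}$ ensures that at most two values of $a$ can satisfy $c_a=c$, leaving infinitely many with $c_a>c$, for which $\langle c_a,0\rangle\in U_{j_0}$ as well. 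The only subtle point is precisely this second case, where the ``pathological'' possibility of infinitely many $a$ sharing a single value $c_a=c$ must be ruled out via injectivity of $\alpha_{j_0}$; once that is handled, the reverse inclusion follows by swapping the roles of $\langle c_a,0\rangle$ and $\langle c_a,1\rangle$ throughout.
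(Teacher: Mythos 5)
Your argument is correct and is essentially the paper's proof: both reduce the problem to the single coordinate $j_0$, where $P_a^+(j_0)=\langle c_a,1\rangle$ and $P_a^-(j_0)=\langle c_a,0\rangle$ are the twin points over $c_a$, and both use the injectivity of $\alpha_{j_0}\restriction J$ to discard the finitely many $a$ for which an endpoint of the neighborhood separates the twins (the paper phrases this as passing to the $a\in B'$ with $\pi(P_a^+(j_0))\notin\{\pi(\min(J_{j_0})),\pi(\max(J_{j_0}))\}$). One small correction: your first case is not quite ``immediate,'' since the left endpoint of $]\langle b,t\rangle,\langle c,0\rangle]$ could itself equal $\langle c_a,0\rangle$ for (at most) one $a$, so that case needs the same finitely-many-exceptions argument you already give in the second case.
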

  
  \begin{proof}
     We shall prove that the accumulation points of $h''(X)$ are contained in the accumulation points of $h''(Y)$ as the other case is analogous.  Let $P$ be an accumulation point of $h''(X)$ and let $W:=\prod_{j\in 2m} J_j$ be a clopen neighborhood of $P$ where each $J_j$ is a clopen interval.
     Since $P$ is an accumulation point, then there is an infinite subset $B'\subseteq B$ such that $\{P_a^+: a\in B' \}\subseteq W.$ By construction $P_a^-(j)=P_a^+(j)$ for any $j\in 2m\setminus\{j_0\}$ and $a\in B.$ In particular, $P_a^-(j)\in J_j$ for any $j\in 2m\setminus\{j_0\}$ and $a\in B'$. Observe that $P_a^+(j_0)\ne P_{b}^+(j_0)$ for any $a\ne b \in B$ as $\alpha_{j_0}\restriction J$ is strictly monotone. Thus, there is an infinite subset $B''\subseteq B'$ such that $\pi(P_a^+(j_0))\notin \{\pi(\min(J_{j_0})),\pi(\max(J_{j_0}))\}$ for all $a\in B''$. It follows that, $\{P_a^-: a\in B''\}\subseteq W$ and hence, $P$ is an accumulation point of $h''(Y)$ as required. 
  \end{proof}
  
  Since $h$ is a homeomorphism, then $X $ and $Y$ have the same accumulation points which is a contradiction. This finishes the proof of the Theorem.  
 \end{proof}

It would be interesting to see if the above theorem can be extended to the hyperspace of all finite unions of non-empty closed intervals $\mathcal{C}(\mathbb{A})$.

	\begin{question}
		Is the hyperspace $\mathcal{C}(\mathbb{A})$ homogeneous?
	\end{question}

\section{A metrization theorem} \label{metri}

\begin{proposition}\label{met}
    Let $X$ be a compact Hausdorff space. If there exists a $G_{\delta}$-set $C\subset X^2$ homeomorphic to $X$ such that for every $x\in X$ there are $a\in C$ and a unique $b\in C$ with $x=\pi_1(a)=\pi_2(b)$, then $X$ is metrizable.
\end{proposition}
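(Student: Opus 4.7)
The plan is to reduce the proposition to \v{S}neider's classical theorem that a compact Hausdorff space $X$ is metrizable if and only if the diagonal $\Delta_X \subseteq X^2$ is a $G_\delta$-subset. The goal is therefore to transfer the $G_\delta$-ness of $C$ onto $\Delta_X$, which I intend to accomplish by exhibiting a self-homeomorphism of $X^2$ that carries $C$ onto $\Delta_X$.

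First I unpack the hypothesis. Since $C \cong X$ is compact and $X^2$ is Hausdorff, $C$ is closed in $X^2$. The uniqueness of $b$ makes $\pi_2|_C$ a continuous bijection between the compact Hausdorff spaces $C$ and $X$, hence a homeomorphism; its inverse $\sigma:X \to C$ has the form $\sigma(y) = (h(y), y)$ with $h = \pi_1 \circ \sigma : X \to X$ continuous, and the existence of $a$ forces $h$ to be surjective. Reading the statement so that the uniqueness applies to $a$ as well (equivalently, $\pi_1|_C$ is also bijective), $h$ becomes a self-homeomorphism of $X$ and $C = \{(h(y), y) : y \in X\}$.

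Next I introduce the map $\Phi : X^2 \to X^2$, $\Phi(a,b) = (h^{-1}(a), b)$. Continuity of $h^{-1}$ makes $\Phi$ a self-homeomorphism of $X^2$, and a direct computation gives
\[
  \Phi(C) = \{(h^{-1}(h(y)), y) : y \in X\} = \Delta_X.
\]
Since $\Phi$ preserves the class of $G_\delta$-sets, $\Delta_X = \Phi(C)$ is a $G_\delta$ in $X^2$, and \v{S}neider's theorem yields metrizability of $X$.

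The main obstacle is that $h$ must actually be bijective; this follows from the uniqueness clause if it is taken to apply to both $a$ and $b$. Under the weaker reading in which only $b$ is unique, so $h$ is merely a continuous surjection and the inverse used in $\Phi$ is ill-defined, one would first need to upgrade surjectivity of $h$ to injectivity. A natural route is to exploit that $C$, being a closed $G_\delta$ in the normal space $X^2$, is a zero-set, so there exists a continuous $\varphi : X^2 \to [0,1]$ with $\varphi^{-1}(0) = C$; one would then use $\varphi$ together with the uniqueness of $b$ to rule out multi-element fibers of $h$ before continuing as above.
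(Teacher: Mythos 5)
Your reduction is correct in the case you actually carry out, but it takes a genuinely different route from the paper. The paper never invokes \v{S}neider's theorem: it interpolates open sets $U_n$ with $C\subset U_n\subset\overline{U_n}\subset U_{n-1}\cap G_n$, shows that for $(s,x)\in C$ the slices $U_n[s]=\{y:(s,y)\in U_n\}$ form a neighbourhood base at $x$, extracts from the $U_n$ a countable base for $X$ by compactness, and applies Urysohn's metrization theorem; \v{S}neider's theorem then falls out as the special case $C=\Delta_X$. Your argument instead presupposes \v{S}neider's theorem and transports $C$ onto the diagonal by the self-homeomorphism $\Phi(a,b)=(h^{-1}(a),b)$ of $X^2$. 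This is shorter, and it makes visible that under your reading (both restricted projections bijective, so $C$ is the transposed graph of an autohomeomorphism $h$) the hypothesis is \emph{equivalent} to the diagonal being a $G_\delta$; the cost is that the proposition then adds nothing to the classical fact, and the paper's Corollary becomes contentless (not circular, since [Sn45] has an independent proof, but it inverts the paper's intent of deriving \v{S}neider as a consequence).

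The one substantive issue is the non-injective case, which you flag but do not resolve. Your closing paragraph is a statement of intent, not an argument: the zero-set $\varphi$ with $\varphi^{-1}(0)=C$ is available, but uniqueness of $b$ constrains the fibers of $\pi_2\restriction C$, not of $\pi_1\restriction C$, and you give no mechanism by which it would force injectivity of $h$. So under the literal reading your proof is incomplete. You should know, however, that the paper's own proof depends on exactly the same injectivity: the step $\bigcap_n\overline{U_n[s]}=\{x\}$ deduces $z=x$ from $(s,z)\in C$ together with $(s,x)\in C$, which is precisely the assertion that the $\pi_1$-fiber of $C$ over $s$ is a singleton. The printed hypothesis thus appears to have the roles of the two projections interchanged relative to the proof, and the reading you adopt is the one under which the paper's argument (and yours) is valid; you should simply say explicitly that you are proving the version in which $\pi_1\restriction C$ is injective, equivalently after replacing $C$ by its transpose.
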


\begin{proof}
For each $n\in\omega$, let $G_n$ be an open subset of $X^2$ such that $C=\bigcap_{n\in\omega}G_n$ and $G_{n+1}\subset G_n$. Since $X^2$ is normal and $C$ is closed, we can define a sequence of open sets $U_n$ as follows. Let $U_0=G_0$ and for $n>0$ let $U_n$ such that $C\subset U_{n}\subset\overline{U_{n}}\subset U_{n-1}\cap G_{n}$. It follows that $C=\bigcap_{n\in\omega}\overline{U_n}$. Let $x\in X$ and $(s,x)\in C$. For $n\in\omega$, let $U_n[s]:=\{y\in X:(s,y)\in U_n\}$. Hence, $\overline{U_n[s]}\subset U_{n-1}[s]$ for any $n>0$, since \begin{equation*}
    \begin{split}
      \overline{U_n[s]}&=\overline{\pi_2''(U_n\cap(\{s\}\times X))}=\pi_2''(\overline{U_n\cap(\{s\}\times X)})\subset\pi_2''(\overline{U_n}\cap\overline{\{s\}\times X})\\ &\subset
      \pi_2''(U_{n-1}\cap(\{s\}\times X))=U_{n-1}[s]  
    \end{split}
\end{equation*}  where the second equality follows from [\cite{en89}, Corollary 3.1.11].

\begin{claim}
For any open neighborhood $V$ of $x$, there exists $n\in\omega$ such that $$x\in U_n[s]\subset V$$
\end{claim}

\begin{proof}
    We proceed by contradiction. Let $V$ be an open neighborhood of $x$ with $U_n[s]\not\subset V$ for any $n\in \omega$. We choose $x_n\in U_n[s]\setminus V$. It follows that $\bigcap_{n\in\omega}\overline{U_n[s]}=\{x\}$, since if $z\in \bigcap_{n\in\omega}\overline{U_n[s]}\subset\bigcap_{n\in\omega}U_n[s]$, then $(s,z)\in\bigcap_{n\in\omega}U_n\subset C$. Since $X$ is compact, the pseudocharacter and the character of $x$ are equal. Since the sets $U_n[s]$ are open, the pseudocharacter of $x$ is countable. Hence, $X$ is first countable. By [\cite{en89}, Theorem 3.10.31] $X$ is sequentially compact. In this way, there exists a convergent subsequence of $(x_n)(n\in\omega)$, let us say with limit $L$. Since each set $\overline{U_m[s]}$ is closed and contains infinitely many elements of such subsequence, we have that $L$ belongs to each one of them. Thus, $L\in\bigcap_{m\in\omega}\overline{U_m[s]}=\{x\}$. By convergence, there are infinite elements of the subsequence in $V$, which is a contradiction.
\end{proof}

For each $(s,t)\in C$ and $n\in\omega$ we can choose open neighborhoods $B_{s,n}$ and $B_{t,n}$ such that $B_{s,n}\times B_{t,n}\subset U_n$. Since $X$ is compact, there exists a finite subcover $\mathcal{B}_n$ of $\{B_{t,n}:t\in X\}$. We claim that $\mathcal{B}=\bigcup_{n\in\omega}\mathcal{B}_n$ is a countable base for $X$. Let $x\in X$ and $V$ an open neighborhood of $x$. By the claim, there is $n\in\omega$ such that $x\in U_n[s]\subset V$. We choose $B_{z,n}$ from $\mathcal{B}_n$ with $x\in B_{z,n}$. Therefore, $x\in B_{z,n}\subset U_n[s]\subset V$.

By the Urysohn's metrization theorem, $X$ is metrizable.
\end{proof}

As a consequence, we obtain the following classical fact.

\begin{corollary}[\cite{sn45}]
Let $X$ a compact Hausdorff space. If the diagonal of $X$ is a $G_{\delta}$-set, then $X$ is metrizable.
\end{corollary}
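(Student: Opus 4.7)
The plan is a direct application of Proposition \ref{met} with $C$ taken to be the diagonal $\Delta_X = \{(x,x) : x \in X\} \subset X^2$.

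First I would check that $\Delta_X$ is homeomorphic to $X$: the map $\delta : X \to \Delta_X$ given by $\delta(x) = (x,x)$ is a continuous bijection from the compact space $X$ onto the Hausdorff subspace $\Delta_X$, hence a homeomorphism. Next I would verify the existence and uniqueness requirement on the coordinate projections. For any $x \in X$, the element $a = (x,x) \in \Delta_X$ satisfies $\pi_1(a) = x$, and it is the only such element of $\Delta_X$ since any $(s,s) \in \Delta_X$ with $\pi_1(s,s) = x$ forces $s = x$. The same argument with $\pi_2$ gives the unique $b = (x,x) \in \Delta_X$ with $\pi_2(b) = x$.

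By the hypothesis that the diagonal of $X$ is a $G_\delta$-set in $X^2$, the set $C = \Delta_X$ satisfies every hypothesis of Proposition \ref{met}. We therefore conclude that $X$ is metrizable.

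There is no substantive obstacle in this argument; the corollary is simply the specialization of Proposition \ref{met} to the diagonal, and all three hypotheses (being a $G_\delta$, being homeomorphic to $X$, and the bijective correspondence under each projection) are either assumed or follow trivially from the definition of $\Delta_X$.
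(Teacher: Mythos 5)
Your proposal is correct and is exactly the intended argument: the paper states this corollary as an immediate consequence of Proposition \ref{met}, leaving implicit precisely the verifications you spell out (that the diagonal is homeomorphic to $X$ via $x\mapsto(x,x)$ and that each point corresponds to a unique element of the diagonal under either projection). Nothing further is needed.
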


\section*{Acknowledgements}
	
The author wants to thanks Professor Carlos Martínez-Ranero for suggesting the problem and the discussions that arose around it.
	
		
		
		
		
		

	\addcontentsline{toc}{section}{References}

	\bigskip
	\noindent Sebastián Barr\'ia\\
	Email: sebabarria@udec.cl\\
	
	
	\noindent  
	Universidad de Concepci\'on, Concepci\'on, Chile\\
	Facultad de Ciencias F\'isicas y Matem\'aticas\\
	Departamento de Matem\'atica\\

\end{document}